\documentclass[10pt]{article}
\usepackage{indentfirst}
\usepackage{comment}
\usepackage{amssymb}
\usepackage{amsmath}
\usepackage{amsthm}
\usepackage{mathrsfs}
\usepackage{xcolor}
\usepackage{bm}
\usepackage{stmaryrd}
\usepackage{epstopdf}
\usepackage{multirow}
\usepackage{booktabs}
\usepackage{float}
\usepackage{caption}
\usepackage{subfigure}
\usepackage{graphicx}
\usepackage{fancyhdr}
\usepackage[nocompress]{cite}
\usepackage{algorithm}
\usepackage{algorithmic}
\usepackage[colorlinks,linkcolor=blue,anchorcolor=red,citecolor=blue]{hyperref}
\numberwithin{equation}{section}
\usepackage{xcolor}

\newtheorem{theorem}{Theorem}[section]

\newtheorem{proposition}[theorem]{Proposition}
\newenvironment{keywords}{\paragraph{Key words.}}{}

\newcommand{\gc}{\mathrm{gc}}

\graphicspath{{figures/}}

\title{Fast and scalable multi-GPU sliced FFT solvers for asymptotically compatible discretizations of 3D nonlocal diffusion problems}
\newcommand{\shorttitle}{Multi-GPU solvers for 3D nonlocal diffusion problems}
\author{Yulong Li\thanks{School of Mathematics and Statistics, and National Center for Applied Mathematics in Hubei, Wuhan University, Wuhan 430072, China.}
\and
Jiale Meng\thanks{Beijing Computational Science Research Center, Beijing, 100193, China.}
\and
Zhenlin Guo\thanks{Co-corresponding author. Beijing Computational Science Research Center, Beijing, 100193, China (zguo@csrc.ac.cn).}
\and
Jiwei Zhang\thanks{Co-corresponding author. School of Mathematics and Statistics, and Hubei Key Laboratory of Computational Science, Wuhan University, Wuhan 430072, China (jiweizhang@whu.edu.cn).}
}
\date{}
\begin{document}
\maketitle

\begin{abstract}
Large-scale 3D peridynamic transient diffusion simulations are constrained by the computational cost and memory requirements of nonlocal interactions. 
We propose a fast and scalable multi-GPU sliced FFT algorithm based on the recently developed asymptotically compatible finite difference (ACFD) scheme. By exploiting the discrete operator's structure and finite interaction range, the algorithm combines FFT acceleration with local inter-GPU communication. It preserves the ACFD scheme while avoiding the global data redistribution required by full 3D  FFTs. We build a matrix-free GPU conjugate gradient (CG) solver, use multigrid preconditioning to reduce iteration counts, and introduce a human-audited configuration agent for efficient cross-level configuration. Strong-scaling tests show that sliced FFT--CG achieves 93.2\% parallel efficiency on eight GPUs, compared with 28.2\% for full FFT--CG on the same problem and hardware. The framework supports approximately 10.29 billion degrees of freedom on eight RTX~4090 GPUs with 24~GB of memory each. The insulated penny-shaped cutout benchmark demonstrates transient simulation with approximately 8.58 billion degrees of freedom. Three-dimensional  star-shaped interface evolution further demonstrates convergence toward the local solution in a nonlinear phase-field problem. These results demonstrate that sliced FFT provides a scalable multi-GPU approach for fast computation of 3D  peridynamic diffusion.
\end{abstract}

\begin{keywords}
Nonlocal diffusion; asymptotic compatibility; sliced FFT; multi-GPU parallel computing; multigrid preconditioning; agent-assisted optimization.
\end{keywords}

\section{Introduction}\label{sec:introduction}

Peridynamics (PD) formulates continuum interactions over finite distances through integral equations, providing a unified nonlocal description of problems with spatial discontinuities, such as material fracture~\cite{Silling2000Peridynamics,DuBook2019}. Originally developed for solid mechanics, this framework has been extended to heat and mass transport, with PD diffusion models replacing local diffusion operators by neighborhood integrals~\cite{BobaruDuangpanya2010Heat,Du2012NonlocalDiffusion,DElia2020NonlocalReview}. These models have been applied to transient heat conduction in domains containing cracks or cutouts and to mass transport during corrosion damage evolution~\cite{BobaruDuangpanya2012Heat,Jafarzadeh2021FCBM,ChenBobaru2015Corrosion}. However, evaluating a volume integral at every spatial node generates extensive neighborhood interactions, making 3D  simulations computationally expensive.

For a fixed interaction radius \(\delta\), refining the mesh size \(h\) increases the number of neighbors per node in 3D as \((\delta/h)^3\). On a fixed domain, direct matrix--vector multiplication and explicit matrix storage can therefore cost \(\mathcal O(N_{\mathrm{dof}}^2)\), where \(N_{\mathrm{dof}}\) is the total number of degrees of freedom~\cite{Jafarzadeh2021FCBM,VollmannSchulz2019}. Decreasing \(\delta\) together with \(h\) in a fixed ratio bounds the neighbor count, but convergence under mesh refinement cannot ensure the correctness of this joint limit; some conventional discretizations can even converge to a wrong local problem~\cite{TianDu2014AC}. To address this issue, Tian and Du established the framework of asymptotic compatibility (AC), requiring convergence both to the nonlocal solution as \(h\to0\) at fixed \(\delta\) and to the correct local solution as \(\delta\) and \(h\) vanish together~\cite{TianDu2014AC,TianDu2020Review}.

Within this framework, Du et al. developed AC finite difference (ACFD) schemes for multidimensional nonlocal diffusion~\cite{Du2019ACFD}. 
They provide a theoretical foundation for reliable engineering computation with nonlocal models, which also can resolve the correct local limits. 
 Subsequent studies further analyzed how interaction-neighborhood approximations affect asymptotic compatibility~\cite{DuXieYin2022Neighborhood,DuXieYinZhang2025Neighborhood}. However, the development of high-performance solvers for 3D  ACFD schemes remains unexplored. Translating their convergence advantages into large-scale computational capability is therefore a key step toward engineering applications. We adopt the recently developed second-order ACFD scheme based on an approximate-ball construction using clipping~\cite{Ding2026Approxcaps}, which yields a structured, symmetric positive definite discrete system on uniform grids.

Exploiting the massive thread parallelism of GPUs to accelerate neighborhood evaluation is an important route to improving the efficiency of peridynamic simulations. OpenCL implementations~\cite{Mossaiby2017OpenCL,Boys2021PeriPy}, CUDA parallelism at both particle and bond levels~\cite{Wang2022GPU}, and memory optimization~\cite{Bartlett2023Memory} have reduced runtimes and increased attainable problem sizes. For example, PeriPy achieves speedups of 1.4--10 over an existing OpenCL implementation in the reported tests~\cite{Boys2021PeriPy}. Wang et al. further simulate heat conduction with moving sources using approximately 76 million particles on three GPUs~\cite{Wang2024MGPB}. These advances primarily arise from parallel execution and storage optimization of neighborhood calculations, whose core remains the evaluation of particle or bond interactions. Thus, although GPUs execute these calculations faster, they still face substantial arithmetic work that grows with the number of neighbors per node. 

A complementary line of research exploits the Toeplitz and convolution structures of nonlocal operators, using FFTs to reduce the cost of operator evaluation to \(\mathcal O(N_{\mathrm{dof}}\log N_{\mathrm{dof}})\)~\cite{WangTian2014FastCollocation,VollmannSchulz2019,DuYang2017Fourier}. Jafarzadeh et al.~\cite{Jafarzadeh2021FCBM} combine fast convolution with embedded constraints to apply FFT acceleration to PD transient diffusion in nonperiodic domains, demonstrating 3D  simulations with more than one billion nodes. This advance substantially reduces the arithmetic work of neighborhood summation, but its computational advantage has not been fully translated into parallel scaling gains. Their method also uses GPU and multi-CPU acceleration; however, for the reported problem with approximately 134 million nodes, increasing the CPU count from eight to sixteen reduces the runtime only from 4.5 to 3.9 hours, a speedup of approximately \(1.15\)~\cite{Jafarzadeh2021FCBM}.

Combining the low arithmetic complexity of FFTs with the parallel computing capacity of multiple GPUs is a natural choice for further improving solver efficiency. Previous studies have pursued this approach by developing distributed multi-GPU FFT methods, but simply distributing the computational workload across more GPUs has not automatically yielded proportional speedups~\cite{Gholami2016AccFFT,Ayala2020heFFTe,Verma2023GPUFFT}. The key obstacle is that full 3D  FFTs require global data redistribution when performing transforms along different directions: as the GPU count increases, the local computational workload per GPU decreases, whereas inter-GPU data exchange and synchronization costs are difficult to reduce proportionally, severely limiting strong scalability~\cite{Ayala2019MPIFFT,Ayala2021FFTScalability,Ayala2022MultiGPUFFT,Czechowski2012FFTCommunication,IbeidOlsonGropp2020Exascale}. In their strong-scaling analysis, Ayala et al. further point out that network saturation reduces the average bandwidth available to each process, ultimately causing linear strong scaling to break down~\cite{Ayala2022MultiGPUFFT}. To mitigate this problem, researchers have adopted different domain decomposition schemes, communication optimizations, and strategies for overlapping computation and communication~\cite{Ayala2019MPIFFT,Ayala2021FFTScalability,Ayala2022MultiGPUFFT,Pekurovsky2012P3DFFT,Gholami2016AccFFT,Ayala2020heFFTe,Verma2023GPUFFT}. However, these optimizations retain the global data redistribution required by the full 3D  transform, leaving the communication bottleneck unresolved. For example, for a fixed \(1024^3\) transform, AccFFT, which uses pencil decomposition and communication overlap, achieves only 26\% strong-scaling efficiency when the GPU count increases from 128 to 4,096~\cite{Gholami2016AccFFT}; after adopting slab decomposition and asynchronous communication optimizations, Verma et al. report that, for a double-precision transform of the same size, the fraction of total runtime spent on communication rises sharply from 51\% to 98\% as the number of A100 GPUs increases from 8 to 128~\cite{Verma2023GPUFFT}. Ayala et al. also explicitly identify the current need to develop algorithms that reduce communication to achieve further speedups while maintaining good scalability~\cite{Ayala2019MPIFFT}. Thus, simply combining FFTs with GPU computing does not yield the efficient solver sought here. The key challenge is how to preserve the original discrete operator and the computational advantages of FFTs while avoiding the global data redistribution introduced by full 3D  FFTs, and to translate the finite interaction range of the nonlocal operator into a local communication structure suited to multi-GPU parallelism.

To address this problem, we exploit the finite interaction range of the ACFD operator to develop horizon-local direct (HLD) evaluation. We observe that HLD exchanges data only between neighboring subdomains and maintains good multi-GPU scaling, although direct neighborhood summation still limits its computational efficiency. This observation motivates our sliced FFT algorithm, which combines the local communication of HLD with fast FFT-based computation for multi-GPU execution. By reorganizing the convolution evaluation of the ACFD operator, the algorithm reduces the computational cost of direct neighborhood summation while avoiding the global data redistribution required by full 3D  FFTs, thereby combining fast operator evaluation with efficient multi-GPU scaling. The original ACFD discrete operator is preserved throughout, retaining the accuracy and asymptotic compatibility of the adopted scheme.

We build matrix-free GPU solvers around these operator algorithms to avoid explicit assembly and storage of the global stiffness matrix, and use multigrid-preconditioned conjugate gradients (MG--PCG) to reduce iteration counts in implicit solves~\cite{ChenCaoSerra2024FastAMG,Naumov2015AmgX}. Multigrid introduces a configuration problem across levels: coarsening changes the problem size, relative interaction range, and balance between computation and communication, so suitable algorithms, GPU counts, and arithmetic precisions~\cite{McCormick2021MixedPrecision} differ between levels. With \(k\) choices per level and \(L\) levels, there are \(k^L\) combinations before cross-level constraints are imposed. These choices must also accommodate memory limits and interlevel data transfers, making manual configuration testing costly in implementation and experimentation.

To address this combinatorial problem, we develop the human-audited \mbox{MG-Pilot} agent for joint configuration optimization across levels, targeting the time and GPU memory use of the complete solve. Scientific agents for task orchestration and execution feedback~\cite{Ding2025SciToolAgent,Shao2026SciSciGPT}, together with CUDA optimization driven by hardware feedback~\cite{Zhang2025CudaForge}, provide the tools for this approach. \mbox{MG-Pilot} screens out combinations that violate grid and resource constraints, invokes the GPU solver automatically, and updates candidate configurations using measured solution time, peak memory use, and numerical convergence. Human auditing checks numerical results and timing scope, after which the retained configurations are validated at the target scale, yielding time-oriented choices for faster solves and capacity-oriented choices for larger simulations.

The main contributions are as follows.\nopagebreak[4]
\begin{itemize}
\item We propose a fast and scalable multi-GPU sliced FFT algorithm. Exploiting the communication locality of HLD, the algorithm combines local communication with fast FFT-based computation and an ACFD scheme, and avoids the global data redistribution required by full 3D  FFTs to remain  fast and scalable for multi-GPU parallel computation.

\item We construct a matrix-free GPU solver framework for the ACFD scheme, avoiding explicit storage of the global stiffness matrix. Multigrid-preconditioned conjugate gradients (MG--PCG) reduce iteration counts and enable efficient solution of large discrete systems.

\item We develop the \mbox{MG-Pilot} agent to address the combinatorial configuration problem across levels in multi-GPU multigrid. It combines feasibility screening, GPU performance feedback, and numerical validation under human auditing to jointly select operator algorithms, GPU counts, arithmetic precisions, and coarsening strategies. The resulting optimized configurations target either shorter solution times or larger attainable problem sizes.
\end{itemize}

Numerical experiments demonstrate nearly linear strong scaling of HLD--CG and sliced FFT--CG on up to eight GPUs. On the same problem and discretization, single-GPU sliced FFT--MG--PCG achieves a speedup of approximately \(1{,}020\times\) over 28-thread CPU--CG. The framework supports approximately 10.29 billion degrees of freedom on eight RTX~4090 GPUs with 24~GB of memory each. Compared with previous work, we increase the attainable scale and reduce computational time for the insulated-cutout benchmark~\cite{Jafarzadeh2021FCBM}. 3D  star-shaped interface evolution further demonstrates efficient large-scale computation for a nonlinear phase-field problem and approximately second-order convergence toward the local solution.

The remainder of this paper is organized as follows. Section~2 introduces the nonlocal diffusion model and its ACFD discretization. Section~3 develops the full FFT, HLD, and sliced FFT algorithms and their multi-GPU formulations. Section~4 presents the multigrid solver and agent-assisted configuration optimization. Section~5 reports numerical experiments for transient diffusion and phase-field problems. Section~6 concludes the paper.

\section{An asymptotically compatible discretization scheme}
\label{sec:acfd-discretization}

We use the nonlocal diffusion operator
\begin{equation}\label{eq:nonlocal-operator}
\mathcal L_\delta u(\bm x)
=2\int_{B_\delta(\bm 0)}
\bigl[u(\bm x+\bm z)-u(\bm x)\bigr]
\rho_\delta(|\bm z|)\,\mathrm d\bm z,
\end{equation}
where \(B_\delta(\bm 0)\) is the ball of interaction radius \(\delta\), \(\bm z\) is the relative displacement, and \(\rho_\delta(|\bm z|)\) is a nonnegative radial kernel supported in \(B_\delta(\bm 0)\). We consider the nonlocal diffusion problem
\begin{equation}\label{eq:continuous-problem}
\begin{cases}
-\mathcal L_\delta u_\delta(\bm x)=f(\bm x),&\bm x\in\Omega,\\
u_\delta(\bm x)=g(\bm x),&\bm x\in\Omega_c.
\end{cases}
\end{equation}
Here, the nonlocal operator is defined in~\eqref{eq:nonlocal-operator}, \(\Omega\subset\mathbb R^3\) is a bounded domain, and \(\Omega_c=\{\bm x\notin\Omega:\operatorname{dist}(\bm x,\partial\Omega)<\delta\}\) is the exterior layer on which the volume constraint is prescribed. The unknown \(u_\delta\) is a scalar field, while \(f\) and \(g\) are the source term and prescribed volume-constraint data, respectively. The kernel satisfies the second-moment condition \(\int_{B_\delta(\bm0)}|\bm z|^2\rho_\delta(|\bm z|)\,\mathrm d\bm z=3\). Under this condition and as \(\delta\to0\), problem~\eqref{eq:continuous-problem} converges to the corresponding local Poisson problem~\cite{Du2012NonlocalDiffusion,DuBook2019}:
\[
-\Delta u=f.
\]

This convergence from the nonlocal model to the classical local model as \(\delta\to0\) is known as the local limit. A numerical scheme is also expected recover this limit, i.e.,  as both \(\delta\) and  \(h \rightarrow 0\) together, its solution should converge to that of the corresponding local problem. Such a scheme is called asymptotically compatible~\cite{TianDu2014AC,TianDu2020Review}. 
It is a central requirement for reliable nonlocal discretizations and provides a theoretical basis for model validation, parameter calibration, and multiscale simulation. We adopt the recently developed ACFD scheme in~\cite{Ding2026Approxcaps}. Based on a uniform grid and an approximate-ball construction, this scheme achieves second-order convergence and preserves asymptotic compatibility. It also permits efficient stiffness matrix assembly and yields a symmetric positive definite discrete system. The discretization is given below.

\begin{figure}[!htbp]
\centering
\includegraphics[width=0.8\linewidth]{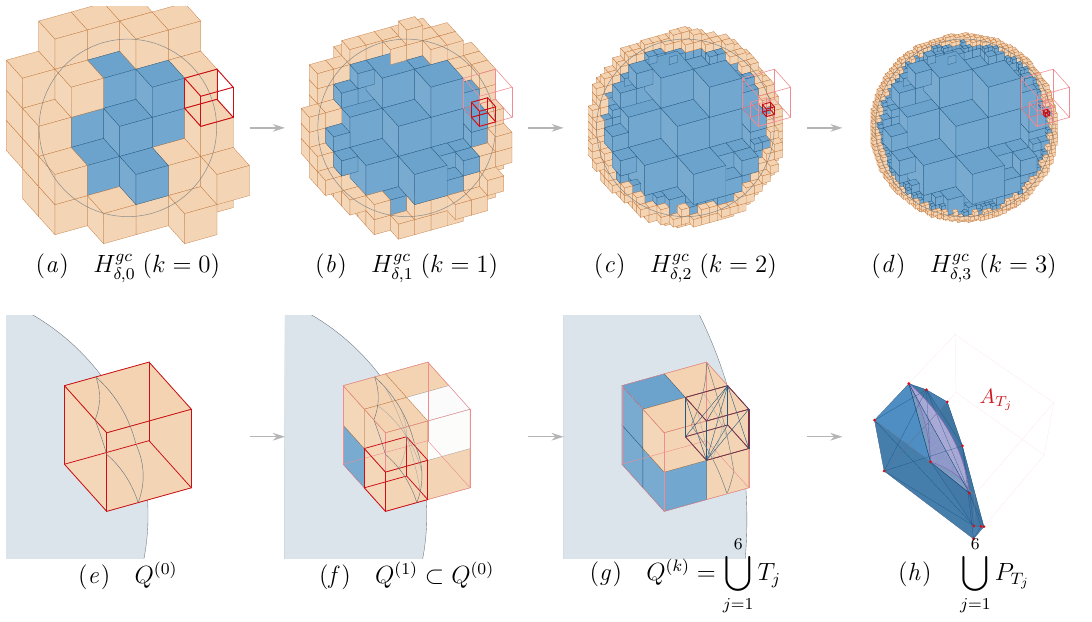}
\caption{Construction of the grid-cell approximate ball: classification of background cells, recursive subdivision of cells intersected by the sphere, and clipping at the final refinement level.}
\label{fig:grid-cell-approximate-ball}
\end{figure}

On a uniform Cartesian grid with spacing \(h\), let \(\bm i=(i_1,i_2,i_3)\in\mathbb Z^3\) denote a grid multi-index, with nodes \(\bm x_{\bm i}=h\bm i\) and nodal values \(u_{\bm i}=u_\delta(\bm x_{\bm i})\). For \(\bm z\ne\bm0\), define \(\omega(\bm z)=|\bm z|^2/\|\bm z\|_1\) and the weighted difference quotient \(q_{\bm i}(\bm z)=[u(\bm x_{\bm i}+\bm z)-u(\bm x_{\bm i})]/\omega(\bm z)\). For each nonzero integer offset \(\bm\xi\), let \(\phi_{\bm\xi}\) be the piecewise trilinear basis function associated with the node \(h\bm\xi\), and let \(I_h\) denote the corresponding interpolation operator. Then
\begin{equation}\label{eq:acfd-interpolation}
I_hq_{\bm i}(\bm z)
=\sum_{\bm\xi\ne\bm0}
\frac{u_{\bm i+\bm\xi}-u_{\bm i}}{\omega(h\bm\xi)}
\phi_{\bm\xi}(\bm z).
\end{equation}

In 3D discretizations, the spherical integration domain is commonly approximated by a polyhedron to facilitate integration over cells intersected by the sphere. We use the grid-cell approxcaps strategy of~\cite{Ding2026Approxcaps}, with \(k\) levels of geometric refinement applied to these cells. The resulting polyhedral approximation inscribed in the interaction ball \(B_\delta(\bm0)\) is denoted by \(H_{\delta,k}^{\gc}\). The approximation satisfies \(H_{\delta,k}^{\gc}\subset B_\delta(\bm0)\), and the refinement level \(k\) controls its geometric error. In the numerical experiments, we use the same refinement level \(k\) as in~\cite{Ding2026Approxcaps}. Replacing the exact ball by this region and substituting~\eqref{eq:acfd-interpolation} into the integral gives
\begin{equation}\label{eq:acfd-stencil}
\begin{aligned}
(\mathcal L_{\delta,h,k}^{\gc}u)_{\bm i}
&=2\int_{H_{\delta,k}^{\gc}}
I_hq_{\bm i}(\bm z)
\omega(\bm z)\rho_\delta(|\bm z|)\,\mathrm d\bm z\\
&=\sum_{\bm\xi\ne\bm0}
\left[
\frac{2}{\omega(h\bm\xi)}
\int_{H_{\delta,k}^{\gc}}
\phi_{\bm\xi}(\bm z)\omega(\bm z)
\rho_\delta(|\bm z|)\,\mathrm d\bm z
\right]
\bigl(u_{\bm i+\bm\xi}-u_{\bm i}\bigr)
:=\sum_{\bm\xi\ne\bm0}a_{\bm\xi}
\bigl(u_{\bm i+\bm\xi}-u_{\bm i}\bigr).
\end{aligned}
\end{equation}
{The coefficients \(a_{\bm\xi}\) depend on \(h\), \(\delta\), and \(k\), but we suppress these dependencies for notational simplicity.}
Denote the coefficients in square brackets by \(a_{\bm\xi}\), and let \(\mathcal S=\{\bm\xi\in\mathbb Z^3\setminus\{\bm0\}:a_{\bm\xi}>0\}\) be the set of integer offsets with nonzero weights. Define the index sets of interior and volume-constraint nodes as \(\mathcal G_h=\{\bm i:\bm x_{\bm i}\in\Omega\}\) and \(\mathcal G_{c,h}=\{\bm i:\bm x_{\bm i}\in\Omega_c\}\), respectively, and write \(f_{\bm i}=f(\bm x_{\bm i})\) and \(g_{\bm i}=g(\bm x_{\bm i})\). {Since the continuous problem~\eqref{eq:continuous-problem} involves \(-\mathcal L_\delta u_\delta=f\), the discrete scheme reverses the sign of the difference in~\eqref{eq:acfd-stencil}:} Combining~\eqref{eq:continuous-problem} and~\eqref{eq:acfd-stencil}, we obtain the complete ACFD scheme: find the nodal values \(U_{\bm i}\) such that
\begin{equation}\label{eq:acfd-full-scheme}
\begin{cases}
\displaystyle
\sum_{\bm\xi\in\mathcal S}
a_{\bm\xi}\bigl(U_{\bm i}-U_{\bm i+\bm\xi}\bigr)=f_{\bm i},
&\bm i\in\mathcal G_h,\\[1.2ex]
U_{\bm i}=g_{\bm i},
&\bm i\in\mathcal G_{c,h}.
\end{cases}
\end{equation}

The discrete scheme~\eqref{eq:acfd-full-scheme} leads to the algebraic system
\begin{equation}\label{AS}
A_h\bm U_h=\bm F_h,
\end{equation}
where the known values at the volume-constraint nodes are incorporated into \(\bm F_h\), and \(A_h\) is a symmetric nonsingular M-matrix~\cite{Du2019ACFD,Ding2026Approxcaps}. {This property guarantees the existence and uniqueness of the discrete solution.
}

\section{Operator reformulation and multi-GPU data dependencies for 3D discrete convolution}

This section develops three efficient GPU algorithms for the algebraic system~\eqref{AS}: full 3D FFT, horizon-local direct (HLD), and sliced FFT.

\subsection{GPU-based full 3D FFT algorithm}\label{sec:full-fft}
To accelerate matrix--vector multiplication in the discrete system of \(A_h\bm U_h\) in~\eqref{AS}, we first examine the structure of matrix  \(A_h\). Let \(d_h=\sum_{\bm\xi\in\mathcal S}a_{\bm\xi}\), and write the central and neighboring coefficients as \(c_{\bm0}=d_h\) and \(c_{\bm\xi}=-a_{\bm\xi}\) for \(\bm\xi\in\mathcal S\), with zero coefficients at all other offsets. Setting \(\bm j=\bm i+\bm\xi\) in~\eqref{eq:acfd-full-scheme} gives
\begin{equation}\label{eq:acfd-toeplitz-entries}
(A_h\bm U_h)_{\bm i}
=d_hU_{\bm i}
-\sum_{\substack{\bm\xi\in\mathcal S\\
\bm i+\bm\xi\in\mathcal G_h}}
a_{\bm\xi}U_{\bm i+\bm\xi}
=\sum_{\bm j\in\mathcal G_h}c_{\bm j-\bm i}U_{\bm j},
\qquad \bm i\in\mathcal G_h.
\end{equation}

On the \(n_x\times n_y\times n_z\) grid of interior nodes, order the unknowns with \(z\) varying fastest, followed by \(y\) and then \(x\). Write \(s=j_x-i_x\), \(t=j_y-i_y\), and \(r=j_z-i_z\). For fixed \(s,t\), the couplings along \(z\) form an \(n_z\times n_z\) matrix \(T_{s,t}\). For fixed \(s\), these matrices form a plane block \(B_s\) along \(y\). Arranging the plane blocks along \(x\) gives the full matrix \(A_h\):
\begingroup
\setlength{\arraycolsep}{2pt}
\begin{equation}\label{eq:three-level-toeplitz}
T_{s,t}=
\begin{bmatrix}
c_{s,t,0}&c_{s,t,1}&c_{s,t,2}&\cdots\\
c_{s,t,-1}&c_{s,t,0}&c_{s,t,1}&\cdots\\
c_{s,t,-2}&c_{s,t,-1}&c_{s,t,0}&\cdots\\
\vdots&\vdots&\vdots&\ddots
\end{bmatrix},
\;
B_s=
\begin{bmatrix}
T_{s,0}&T_{s,1}&T_{s,2}&\cdots\\
T_{s,-1}&T_{s,0}&T_{s,1}&\cdots\\
T_{s,-2}&T_{s,-1}&T_{s,0}&\cdots\\
\vdots&\vdots&\vdots&\ddots
\end{bmatrix},
\;
A_h=
\begin{bmatrix}
B_0&B_1&B_2&\cdots\\
B_{-1}&B_0&B_1&\cdots\\
B_{-2}&B_{-1}&B_0&\cdots\\
\vdots&\vdots&\vdots&\ddots
\end{bmatrix}.
\end{equation}
\endgroup
Each diagonal of \(T_{s,t}\) has a constant coefficient, and each block diagonal of \(B_s\) and \(A_h\) repeats the same block. Thus, \(A_h\) has a multilevel Toeplitz structure, which reduces matrix storage and permits FFT acceleration of matrix--vector products.

Following the size requirements for standard circulant embedding~\cite{VollmannSchulz2019}, zero-pad the interior vector \(\bm U_h\) to a \(Q_x\times Q_y\times Q_z\) array \(U^Q\). The central symmetry of the weights gives \(c_{\bm\xi}=c_{-\bm\xi}\), so~\eqref{eq:acfd-toeplitz-entries} can be written as a convolution:
\begin{equation}\label{eq:stencil-to-linear-convolution}
(A_h\bm U_h)_{\bm i}
=\sum_{\bm j\in\mathcal G_h}c_{\bm j-\bm i}U_{\bm j}
=\sum_{\bm j\in\mathcal G_h}c_{\bm i-\bm j}U_{\bm j}
=(c*_{\rm lin}U^Q)_{\bm i},
\qquad \bm i\in\mathcal G_h.
\end{equation}
Here, \(*_{\rm lin}\) denotes discrete linear convolution. Embed the coefficient array \(c\) in an array \(c^Q\) of the same size, and let \(C\) be the three-level circulant matrix with convolution kernel \(c^Q\). At each level, successive block rows of \(C\) are obtained by cyclic shifts. The product \(Y=CU^Q\) satisfies \(Y_{\bm i}=(A_h\bm U_h)_{\bm i}\) at interior nodes.

Let \(\mathcal F_Q\) and \(\mathcal F_Q^{-1}\) denote the discrete Fourier transform matrix and its inverse. Write \(\bm\kappa=(\kappa_x,\kappa_y,\kappa_z)\) for the frequency index and \(\mathrm i\) for the imaginary unit. Transforming the coefficient array \(c^Q\) gives the kernel spectrum
\begin{equation}\label{eq:full-fft-symbol}
\lambda_{\bm\kappa}
=(\mathcal F_Qc^Q)_{\bm\kappa}
=d_h-\sum_{\bm\xi\in\mathcal S}a_{\bm\xi}
\exp\!\Big[-2\pi\mathrm i
\big(
\frac{\kappa_x\xi_x}{Q_x}
+\frac{\kappa_y\xi_y}{Q_y}
+\frac{\kappa_z\xi_z}{Q_z}
\big)\Big].
\end{equation}
\begin{samepage}
Arrange \(\lambda_{\bm\kappa}\) along the main diagonal in frequency order to form the diagonal matrix \(\Lambda\). Fourier diagonalization of the three-level circulant matrix gives~\cite{VollmannSchulz2019,Gray2006Toeplitz}
\begin{equation}\label{eq:full-fft-diagonalization}
\mathcal F_QC\mathcal F_Q^{-1}
=\Lambda
=\begin{bmatrix}
\lambda_{0,0,0}&0&0&\cdots\\
0&\lambda_{0,0,1}&0&\cdots\\
0&0&\lambda_{0,0,2}&\cdots\\
\vdots&\vdots&\vdots&\ddots
\end{bmatrix}.
\end{equation}
\end{samepage}

Denote the input and output spectra by \(\widehat U^Q=\mathcal F_QU^Q\) and \(\widehat Y=\mathcal F_QY\), respectively. Equation~\eqref{eq:full-fft-diagonalization} yields \(\widehat Y=\mathcal F_QCU^Q=\mathcal F_QC\mathcal F_Q^{-1}\widehat U^Q=\Lambda\widehat U^Q\). Each component therefore requires only the product \(\widehat Y_{\bm\kappa}=\lambda_{\bm\kappa}\widehat U^Q_{\bm\kappa}\). Applying the inverse FFT gives \(Y=\mathcal F_Q^{-1}\widehat Y\); extracting its interior-node components recovers \(A_h\bm U_h\). Thus, each matrix--vector product required to solve~\eqref{AS} uses one forward FFT, componentwise multiplication, and one inverse FFT, substantially reducing the cost of direct neighborhood summation.

On multiple GPUs, multiplication by \(\Lambda\) is readily parallelized, but the forward and inverse Fourier transforms still require inter-GPU communication. When \(U^Q\) is distributed along \(x\), each GPU holds a subset of complete \(yz\) planes. Let \(U^Q_{j_x}\) denote plane \(j_x\) of the extended array, and let \(\mathcal F_{yz}\) be the two-dimensional discrete Fourier transform matrix on a \(Q_y\times Q_z\) plane. Using zero-based indices for the extended array, the full 3D  transform satisfies
\begin{equation}\label{eq:full-fft-global-dependence}
(\mathcal F_QU^Q)(\kappa_x,\kappa_y,\kappa_z)
=\sum_{j_x=0}^{Q_x-1}
\exp\!\left(-\frac{2\pi\mathrm i\kappa_xj_x}{Q_x}\right)
(\mathcal F_{yz}U^Q_{j_x})(\kappa_y,\kappa_z),
\end{equation}
where \(0\le\kappa_\mu<Q_\mu\) for \(\mu=x,y,z\). Thus, even after the plane-wise transforms have been completed locally, each output frequency along \(x\) still depends on input planes distributed across all subdomains. The finite interaction range of the original discrete operator therefore does not make the full 3D  transform communication-local. Conventional distributed FFTs perform these transforms through global redistribution of volume data. Increasing the GPU count reduces the local arithmetic work but retains cross-subdomain data exchange, creating a major bottleneck for strong scaling. To reduce this communication, we next exploit the finite interaction range of the discrete operator to construct HLD.

\subsection{HLD algorithm}
HLD evaluates matrix--vector products directly within the finite interaction range of the discrete operator. Distribute the grid along \(x\) over \(P\) GPUs, indexed by \(p=0,\ldots,P-1\). Let \(\ell_p\) and \(u_p\) be the global indices of the first and last \(x\) planes owned by GPU \(p\). For interior nodes with \(\ell_p\le i_x\le u_p\), split~\eqref{eq:acfd-toeplitz-entries} according to whether the input node is local as
\begin{equation}\label{eq:hld-local-remote-split}
(A_h\bm U_h)_{\bm i}
=
\sum_{\substack{\bm j\in\mathcal G_h\\
\ell_p\le j_x\le u_p}}
c_{\bm j-\bm i}U_{\bm j}
+
\sum_{\substack{\bm j\in\mathcal G_h\\
j_x<\ell_p\ \mathrm{or}\ j_x>u_p}}
c_{\bm j-\bm i}U_{\bm j}.
\end{equation}
\begin{figure}[!htb]
\centering
\includegraphics[width=\textwidth]{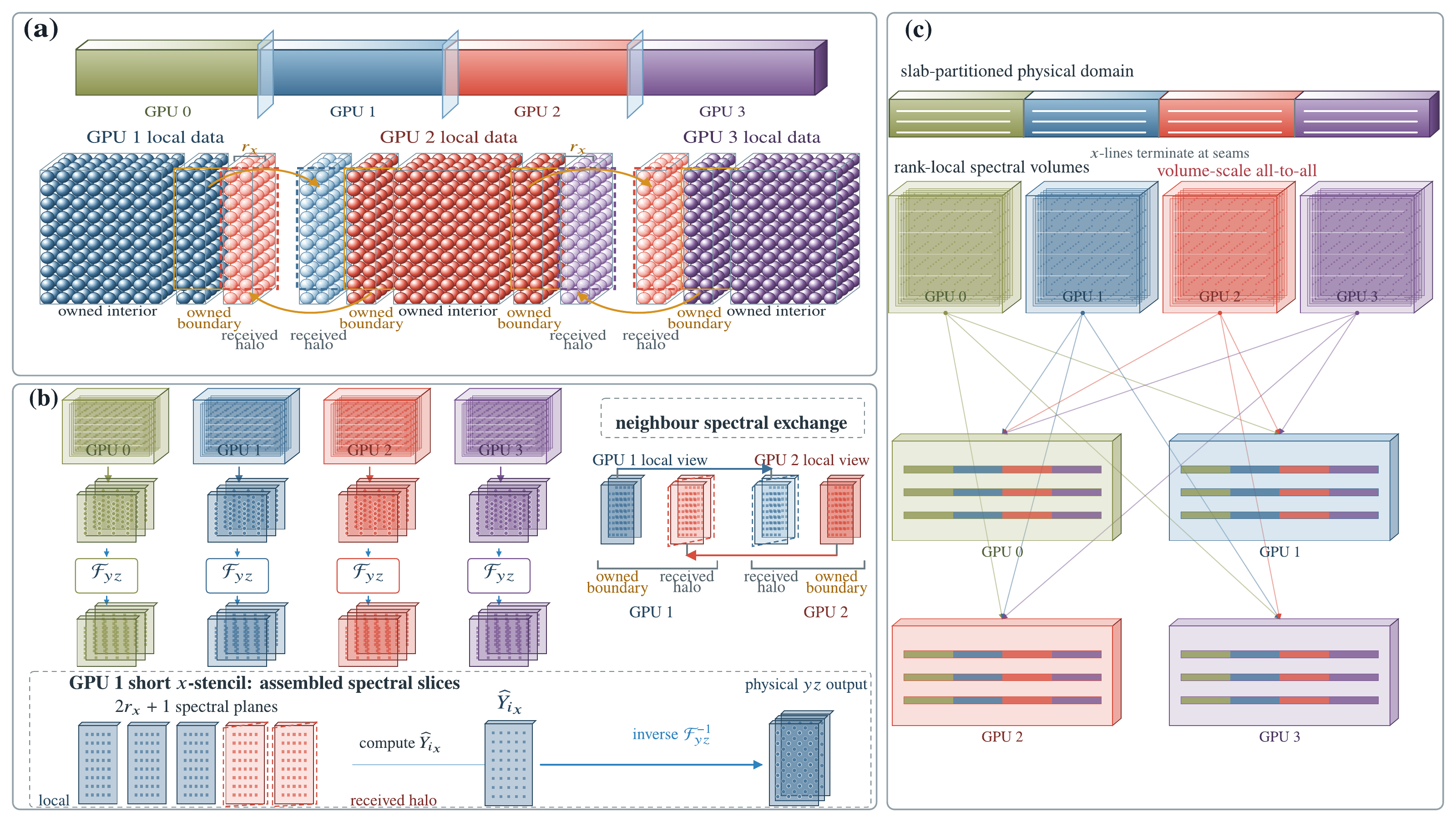}
\caption{Multi-GPU data dependencies under slab decomposition. (a) HLD exchanges physical-space halos between neighboring subdomains. (b) Sliced FFT exchanges spectral halos after plane-wise Fourier transforms. (c) Full 3D FFT requires global redistribution of volume data.}
\label{fig:multigpu-operators}
\end{figure}
The first term uses local data; the second requires nodal values from other GPUs. Define the half-bandwidth along \(x\) by
\[
r_x=\max_{\bm\xi\in\mathcal S}|\xi_x|.
\]

Since \(c_{\bm j-\bm i}=0\) when \(|j_x-i_x|>r_x\), remote inputs for the local planes \(\ell_p\le i_x\le u_p\) lie only in \([\ell_p-r_x,\ell_p-1]\) and \([u_p+1,u_p+r_x]\). If each subdomain contains at least \(r_x\) planes, these two ranges belong to the left and right neighboring subdomains. Each GPU therefore needs only to receive these values from its neighbors and store them in local halo buffers to complete its matrix--vector product.

In the implementation, each GPU initiates halo exchange on both sides. If the local plane count exceeds \(2r_x\), nodes independent of remote data can be processed while communication proceeds. Once the halo data arrive, only the two boundary layers remain to be computed. This overlaps local communication with computation. Each GPU accumulates the contributions in~\eqref{eq:hld-local-remote-split}; the resulting local segments together form \(A_h\bm U_h\).

HLD exchanges data only between neighboring subdomains, avoiding the global redistribution required by full 3D FFTs. It still accumulates neighborhood contributions directly. To further reduce computational cost, we next combine FFT acceleration with this local communication structure.

\subsection{Sliced FFT algorithm}
The sliced FFT algorithm combines the computational efficiency of FFTs with the local communication of HLD. It applies Fourier transforms to the plane blocks while retaining their coupling along \(x\). Let \(U_{i_x}\) denote the vector of nodal values of \(\bm U_h\) on plane \(i_x\). The block structure in~\eqref{eq:three-level-toeplitz} gives
\begin{equation}\label{eq:sliced-fft-spatial}
(A_h\bm U_h)_{i_x}
=\sum_{s=-r_x}^{r_x}B_sU_{i_x+s},
\end{equation}
where each plane block \(B_s\) has a two-level Toeplitz structure, and \(B_s=0\) for \(|s|>r_x\). Using the standard circulant embedding described in Section~\ref{sec:full-fft}, zero-pad each \(U_{i_x}\) along \(y,z\) to a \(Q_y\times Q_z\) array \(U^Q_{i_x}\), and embed \(B_s\) in a two-level circulant matrix \(C_s\). Retain the original plane indices along \(x\), and denote the output on each extended plane by \(Y_{i_x}=\sum_{s=-r_x}^{r_x}C_sU^Q_{i_x+s}\).

Use the plane Fourier transform matrix \(\mathcal F_{yz}\) defined in \eqref{eq:full-fft-global-dependence}. For a fixed plane offset \(s\), the coefficients \(c_{s,t,r}\) of \(B_s\) give
\begin{equation}\label{eq:sliced-block-symbol}
\lambda_s(\kappa_y,\kappa_z)
=\sum_{t,r\in\mathbb Z}c_{s,t,r}
\exp\!\Big[
2\pi\mathrm i
\big(
\frac{\kappa_y t}{Q_y}
+\frac{\kappa_z r}{Q_z}
\big)
\Big],
\qquad
0\le\kappa_y<Q_y,\quad 0\le\kappa_z<Q_z.
\end{equation}
Placing these spectral coefficients on the main diagonal gives a diagonal matrix \(\Lambda_s\), with
\[
\mathcal F_{yz}C_s\mathcal F_{yz}^{-1}=\Lambda_s.
\]

Apply this transform separately to all planes. The overall transform matrix is \(\mathcal F_{\rm slice}=\operatorname{diag}(\mathcal F_{yz},\ldots,\mathcal F_{yz})\), with \(n_x\) identical diagonal blocks. Hence,
\begingroup
\setlength{\arraycolsep}{4pt}
\begin{equation}\label{eq:sliced-global-factorization}
\mathcal F_{\rm slice}
\begin{bmatrix}
C_0&C_1&C_2&\cdots\\
C_{-1}&C_0&C_1&\cdots\\
C_{-2}&C_{-1}&C_0&\cdots\\
\vdots&\vdots&\vdots&\ddots
\end{bmatrix}
\mathcal F_{\rm slice}^{-1}
=
\begin{bmatrix}
\Lambda_0&\Lambda_1&\Lambda_2&\cdots\\
\Lambda_{-1}&\Lambda_0&\Lambda_1&\cdots\\
\Lambda_{-2}&\Lambda_{-1}&\Lambda_0&\cdots\\
\vdots&\vdots&\vdots&\ddots
\end{bmatrix}.
\end{equation}
\endgroup
Each transformed plane block is diagonal. Since \(C_s=0\) for \(|s|>r_x\), we also have \(\Lambda_s=0\) for these offsets. The overall matrix therefore retains its block Toeplitz structure with half-bandwidth \(r_x\) along \(x\).
Denote the input and output plane spectra by \(\widehat U^Q_{i_x}=\mathcal F_{yz}U^Q_{i_x}\) and \(\widehat Y_{i_x}=\mathcal F_{yz}Y_{i_x}\), respectively. Equation~\eqref{eq:sliced-global-factorization} gives
\begin{equation}\label{eq:sliced-spectral-action}
\widehat Y_{i_x}
=\mathcal F_{yz}\sum_{s=-r_x}^{r_x}C_sU^Q_{i_x+s}
=\sum_{s=-r_x}^{r_x}
\bigl(\mathcal F_{yz}C_s\mathcal F_{yz}^{-1}\bigr)
\widehat U^Q_{i_x+s}
=\sum_{s=-r_x}^{r_x}\Lambda_s\widehat U^Q_{i_x+s}.
\end{equation}
Because \(\Lambda_s\) is diagonal, this reduces to a separate sum at each frequency:
\begin{equation}\label{eq:sliced-fft-operator}
\widehat Y_{i_x}(\kappa_y,\kappa_z)
=\sum_{s=-r_x}^{r_x}
\lambda_s(\kappa_y,\kappa_z)
\widehat U^Q_{i_x+s}(\kappa_y,\kappa_z).
\end{equation}

Each output spectral component requires data from at most \(2r_x+1\) neighboring planes. An inverse Fourier transform on each plane gives \(Y_{i_x}=\mathcal F_{yz}^{-1}\widehat Y_{i_x}\). Extracting the interior-node components recovers \(A_h\bm U_h\). For fixed weights and a fixed grid level, \(\lambda_s\) is precomputed once and reused. Each matrix--vector product then requires plane-wise forward FFTs, the multiplications and summations in~\eqref{eq:sliced-fft-operator}, and plane-wise inverse FFTs.

After decomposition along \(x\), each GPU owns complete \(yz\) planes. The block-diagonal transform \(\mathcal F_{\rm slice}\) acts independently on these planes, so both forward and inverse FFTs are local, and the spectra remain on the same GPU. The transformed matrix retains half-bandwidth \(r_x\) along \(x\). By~\eqref{eq:sliced-fft-operator}, computing outputs on \(\ell_p\le i_x\le u_p\) requires input spectra only from \([\ell_p-r_x,u_p+r_x]\). The only nonlocal inputs are the \(r_x\) planes immediately beyond each local boundary. If each subdomain contains at least \(r_x\) planes, the left and right neighboring GPUs supply these spectra to the local halo buffers. Inputs and outputs retain the original plane distribution, so no global redistribution is needed. When the local plane count exceeds \(2r_x\), interior-plane computation overlaps spectral exchange, and the two boundary layers are processed after the data arrive. Plane-block diagonalization thus accelerates neighborhood summation while preserving local communication along \(x\). Figure~\ref{fig:multigpu-operators} compares the data dependencies of the three algorithms. Algorithm~\ref{alg:sliced-fft} summarizes the parallel sliced FFT procedure.

\begingroup
\setlength{\intextsep}{4pt}
\begin{algorithm}[H]
\caption{Multi-GPU matrix--vector multiplication by sliced FFT}
\label{alg:sliced-fft}
\small
\begin{algorithmic}[1]
\REQUIRE Coefficients \(c_{s,t,r}\), interior vector \(\bm U_h\), plane extension sizes \(Q_y,Q_z\), and plane ranges \([\ell_p,u_p]\) on each GPU; each subdomain contains at least \(r_x\) planes.
\ENSURE Components of \(A_h\bm U_h\) at the interior nodes owned by each GPU.
\STATE Precompute \(\lambda_s\), \(s=-r_x,\ldots,r_x\), using~\eqref{eq:sliced-block-symbol}; reuse them for fixed weights and a fixed grid level.
\STATE On all GPUs in parallel, set spectra outside the global interior-plane range to zero and proceed as follows.
\STATE Zero-pad each local plane \(U_{i_x}\) along \(y,z\), and compute \(\widehat U^Q_{i_x}=\mathcal F_{yz}U^Q_{i_x}\).
\STATE Initiate exchange with neighboring GPUs: send spectra from the \(r_x\) planes at each end and receive spectral halos.
\STATE During communication, apply~\eqref{eq:sliced-fft-operator} to the interior planes \(\ell_p+r_x\le i_x\le u_p-r_x\).
\STATE Once the spectral halos arrive, apply~\eqref{eq:sliced-fft-operator} to the remaining local planes.
\STATE Compute \(Y_{i_x}=\mathcal F_{yz}^{-1}\widehat Y_{i_x}\) and extract the interior-node components on each local plane.
\end{algorithmic}
\end{algorithm}
\endgroup

The following propositions summarize the algebraic exactness, bandwidth preservation, and computational complexity of this construction. We compare the matrix product and the FFT result directly at the original interior nodes. We set \(U_{i_x}\) and its zero-padded extension to zero outside the global interior-plane range.

\begin{proposition}[Algebraic exactness and bandwidth preservation]\label{prop:sliced-exactness}
Let \(C_s\) be an alias-free circulant embedding of \(B_s\), with \(\Lambda_s=\mathcal F_{yz}C_s\mathcal F_{yz}^{-1}\). In exact arithmetic, sliced FFT satisfies
\begin{equation}\label{eq:sliced-exactness}
(A_h\bm U_h)_{i_x,j,k}
=\left[
\mathcal F_{yz}^{-1}
\left(\sum_{s=-r_x}^{r_x}
\Lambda_s\mathcal F_{yz}U^Q_{i_x+s}\right)
\right]_{j,k}
\end{equation}
for every \(\bm U_h\) at the original interior nodes \(1\le i_x\le n_x\), \(1\le j\le n_y\), \(1\le k\le n_z\). Moreover, \(\Lambda_s=0\) for \(|s|>r_x\), so the plane-wise Fourier transform preserves the block half-bandwidth along \(x\).
\end{proposition}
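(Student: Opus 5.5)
The plan is to check \eqref{eq:sliced-exactness} in three moves: pass from \(A_h\) to its plane-block form, embed each block in a circulant, and then diagonalize plane by plane.

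\emph{Plane-block form.} Start from the entrywise formula \eqref{eq:acfd-toeplitz-entries}. Group the sum over \(\bm j\) by the plane offset \(s=j_x-i_x\). Since \(c_{\bm\xi}=0\) whenever \(|\xi_x|>r_x\) (by the definition of \(r_x\) and of the support \(\mathcal S\)), only \(|s|\le r_x\) contributes. For fixed \(s\), the remaining sum over \((j_y,j_z)\) is exactly the action of the two-level Toeplitz block \(B_s\) on \(U_{i_x+s}\). This gives \eqref{eq:sliced-fft-spatial} for \(1\le i_x\le n_x\). Planes with \(i_x+s\) outside \([1,n_x]\) are excluded in \eqref{eq:acfd-toeplitz-entries} because \(\bm j\in\mathcal G_h\). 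The convention that \(U_{i_x}\) and \(U^Q_{i_x}\) vanish outside the global interior range reproduces that exclusion automatically.

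\emph{Circulant embedding.} Next, use the alias-free assumption on \(C_s\). This means \(Q_y\ge n_y+r_y\) and \(Q_z\ge n_z+r_z\), in the sense of the standard embedding of Section~\ref{sec:full-fft}, where \(r_y,r_z\) are the half-bandwidths in \(y\) and \(z\). Under this condition, for any \(U_{i_x+s}\) zero-padded to \(U^Q_{i_x+s}\), the cyclic product \(C_sU^Q_{i_x+s}\) restricted to indices \(1\le j\le n_y\), \(1\le k\le n_z\) equals \(B_sU_{i_x+s}\). The key step is the wrap-around check. A cyclic index \(j-t \bmod Q_y\) with \(|t|\le r_y\) either lands back in \([1,n_y]\) as the true index or lands in the zero padding, so no spurious term enters. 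The same holds in \(z\). Summing over \(s\) then gives \((A_h\bm U_h)_{i_x,j,k}=[\sum_s C_sU^Q_{i_x+s}]_{j,k}\) at interior nodes.

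\emph{Plane-wise diagonalization.} Write \(C_s=\mathcal F_{yz}^{-1}\Lambda_s\mathcal F_{yz}\), which is the two-level circulant diagonalization already used in \eqref{eq:full-fft-diagonalization}, applied in two dimensions. Then \(\sum_s C_sU^Q_{i_x+s}=\mathcal F_{yz}^{-1}\sum_s\Lambda_s\mathcal F_{yz}U^Q_{i_x+s}\) by linearity of \(\mathcal F_{yz}^{-1}\), and this yields \eqref{eq:sliced-exactness}. One consistency point needs checking: the symbol \eqref{eq:sliced-block-symbol} uses the \(+\) sign convention, and it must match the DFT convention of \(\mathcal F_{yz}\). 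By the central symmetry \(c_{\bm\xi}=c_{-\bm\xi}\), replacing \((t,r)\) by \((-t,-r)\) shows that either sign gives the same \(\lambda_s\), so nothing hinges on this.

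\emph{Bandwidth preservation.} For \(|s|>r_x\), all coefficients \(c_{s,t,r}\) vanish. Hence \(B_s=0\), its embedding \(C_s=0\), and \(\Lambda_s=\mathcal F_{yz}0\,\mathcal F_{yz}^{-1}=0\). The main obstacle is stating the alias-free condition precisely enough to justify the wrap-around check in the embedding step; the other steps are routine linear algebra.
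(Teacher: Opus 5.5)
Your proof is correct and follows the paper's argument: the alias-free embedding restricts $C_sU^Q_{i_x+s}$ to $B_sU_{i_x+s}$ at interior nodes, linearity undoes the plane-wise diagonalization, and $c_{s,t,r}=0$ for $|s|>r_x$ gives $C_s=\Lambda_s=0$; your condition $Q_\mu\ge n_\mu+r_\mu$ with the explicit wrap-around check is a sharper version of the paper's sufficient choice $Q_\mu\ge n_\mu+2\max_{\bm\xi\in\mathcal S}|\xi_\mu|$. Your sign-convention aside is misjustified, though harmless because the statement defines $\Lambda_s$ by conjugation: central symmetry gives $c_{s,t,r}=c_{-s,-t,-r}$, so flipping the sign in \eqref{eq:sliced-block-symbol} turns $\lambda_s$ into $\lambda_{-s}$ rather than leaving it unchanged, and in fact the $+$ sign is already the diagonal of $\mathcal F_{yz}C_s\mathcal F_{yz}^{-1}$ under the paper's $c_{\bm j-\bm i}$ convention (with the $e^{-2\pi\mathrm i\kappa j/Q}$ DFT), so no symmetry is needed.
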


\begin{proof}
An alias-free circulant embedding makes circular convolution agree with linear convolution at the original interior nodes. For example,
\[
Q_\mu\ge n_\mu+2\max_{\bm\xi\in\mathcal S}|\xi_\mu|,
\qquad \mu=y,z,
\]
is a sufficient choice of extension sizes. For \(1\le j\le n_y\) and \(1\le k\le n_z\), zero-padding and circulant embedding give
\begin{equation}\label{eq:sliced-block-restriction}
\begin{aligned}
(C_sU^Q_{i_x+s})_{j,k}
&=\sum_{j'=1}^{n_y}\sum_{k'=1}^{n_z}
c_{s,j'-j,k'-k}(U_{i_x+s})_{j',k'} =(B_sU_{i_x+s})_{j,k}.
\end{aligned}
\end{equation}
Using \(\mathcal F_{yz}^{-1}\Lambda_s\mathcal F_{yz}=C_s\), equations~\eqref{eq:sliced-block-restriction} and~\eqref{eq:sliced-fft-spatial} yield
\begin{equation}\label{eq:sliced-exactness-proof}
\begin{aligned}
\left[
\mathcal F_{yz}^{-1}
\left(\sum_{s=-r_x}^{r_x}\Lambda_s\mathcal F_{yz}U^Q_{i_x+s}\right)
\right]_{j,k}
&=\sum_{s=-r_x}^{r_x}(C_sU^Q_{i_x+s})_{j,k} =\sum_{s=-r_x}^{r_x}(B_sU_{i_x+s})_{j,k}
=(A_h\bm U_h)_{i_x,j,k}.
\end{aligned}
\end{equation}
For \(|s|>r_x\), the finite interaction range implies \(c_{s,t,r}=0\) for all \(t,r\). The circulant block constructed from these coefficients is therefore \(C_s=0\), and hence \(\Lambda_s=0\). Thus, the plane-wise transform does not enlarge the coupling range along the decomposition direction.
\end{proof}

Proposition~\ref{prop:sliced-exactness} shows that sliced FFT changes only how the matrix is applied. It preserves the original ACFD discrete system, including the accuracy and asymptotic compatibility of the scheme and the symmetry and positive definiteness of the matrix.

\begin{proposition}[Computational complexity]\label{prop:sliced-complexity}
Suppose that the transverse padding ratios are bounded, with \(Q_y\le c_y n_y\) and \(Q_z\le c_z n_z\) for fixed constants \(c_y,c_z\). After precomputation of the kernel spectra, the arithmetic complexity of each sliced FFT matrix--vector product is
\begin{equation}\label{eq:sliced-complexity}
\mathcal O\!\left(
N_{\mathrm{dof}}\log N_{\mathrm{dof}}+r_xN_{\mathrm{dof}}
\right),
\qquad N_{\mathrm{dof}}=n_xn_yn_z.
\end{equation}
In particular, for bounded \(r_x\), the total arithmetic work is \(\mathcal O(N_{\mathrm{dof}}\log N_{\mathrm{dof}})\). A balanced decomposition over \(P\le n_x\) GPUs gives \(\mathcal O(N_{\mathrm{dof}}\log N_{\mathrm{dof}}/P)\) arithmetic work per GPU.
\end{proposition}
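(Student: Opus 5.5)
The plan is to count the arithmetic of one application of Algorithm~\ref{alg:sliced-fft} stage by stage, assuming the kernel spectra \(\lambda_s\) have been precomputed as in the algorithm's first step. We then sum the stages and use the padding bounds to convert plane sizes into \(N_{\mathrm{dof}}\). Set \(M=Q_yQ_z\), the size of one extended plane. By hypothesis
\[
M\le c_yc_z\,n_yn_z,
\qquad\text{so}\qquad
n_xM\le c_yc_z\,N_{\mathrm{dof}}.
\]
The stages are the zero-padding and forward plane FFTs, the spectral combination~\eqref{eq:sliced-fft-operator}, the inverse plane FFTs, and the extraction of interior components. Proposition~\ref{prop:sliced-exactness} guarantees that this sequence computes \(A_h\bm U_h\) exactly, so no further correction step has to be counted.

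\textbf{FFT stages.} Zero-padding and extraction touch each of the \(n_xM\) entries once, costing \(\mathcal O(n_xM)=\mathcal O(N_{\mathrm{dof}})\). A 2D FFT on a \(Q_y\times Q_z\) plane costs \(\mathcal O(M\log M)\) by standard FFT complexity. To stay safe for arbitrary \(Q_\mu\), I would either invoke Bluestein-type algorithms or assume the padding sizes are chosen FFT-friendly within the bounded ratios. The \(n_x\) forward transforms plus \(n_x\) inverse transforms then cost \(\mathcal O(n_xM\log M)\). Since \(M\le c_yc_zn_yn_z\le c_yc_zN_{\mathrm{dof}}\), we have \(\log M\le \log N_{\mathrm{dof}}+\mathcal O(1)\), and therefore
\[
\mathcal O(n_xM\log M)=\mathcal O(N_{\mathrm{dof}}\log N_{\mathrm{dof}}).
\]

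\textbf{Spectral combination.} For each of the \(n_x\) output planes and each of the \(M\) frequencies, \eqref{eq:sliced-fft-operator} needs \(2r_x+1\) complex multiply-adds. This is justified because \(\Lambda_s\) is diagonal and \(\Lambda_s=0\) for \(|s|>r_x\) (Proposition~\ref{prop:sliced-exactness}). The total is \(\mathcal O((2r_x+1)n_xM)=\mathcal O(r_xN_{\mathrm{dof}}+N_{\mathrm{dof}})\). Summing all stages gives~\eqref{eq:sliced-complexity}. When \(r_x\) is bounded, the \(r_xN_{\mathrm{dof}}\) term is absorbed into \(N_{\mathrm{dof}}\log N_{\mathrm{dof}}\).

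\textbf{Per-GPU work.} With a balanced slab decomposition over \(P\le n_x\) GPUs, each GPU owns \(\lceil n_x/P\rceil\le 2n_x/P\) planes. All of the stages above are plane-local, except that a GPU reads halo spectra from its neighbours in the combination step. Reading those halos adds communication but no extra arithmetic per owned output plane, since each output plane still costs \((2r_x+1)M\). Hence the per-GPU work is \(\mathcal O\bigl((n_x/P)(M\log M+r_xM)\bigr)\), which is \(\mathcal O(N_{\mathrm{dof}}\log N_{\mathrm{dof}}/P)\) for bounded \(r_x\).

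The argument is routine bookkeeping. The only real obstacle is the FFT cost for non-smooth padding lengths, which the plan handles by the FFT-friendly choice of \(Q_\mu\) or by Bluestein's algorithm, as noted above.
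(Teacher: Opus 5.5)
Your proposal is correct and follows essentially the same route as the paper: a stage-by-stage count (plane FFTs at \(\mathcal O(M\log M)\) each, \((2r_x+1)M\) multiply-adds per plane for the spectral coupling, linear padding and restriction), conversion to \(N_{\mathrm{dof}}\) via \(n_xM\le c_yc_zN_{\mathrm{dof}}\) and \(\log M\le\log N_{\mathrm{dof}}+\mathcal O(1)\), and the balanced-decomposition bound \(\lceil n_x/P\rceil\le 2n_x/P\). The differences are only in presentation: the paper bounds the per-GPU work \(W_p\) first and then sums, and it also records the \(\mathcal O(r_xM)\) halo communication volume, whereas you count the total first and add a remark about FFT cost for non-smooth padding lengths, which the paper takes for granted.
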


\begin{proof}
Let \(M=Q_yQ_z\ge2\) be the number of entries in a padded plane, and let \(n_p=u_p-\ell_p+1\) be the number of planes owned by GPU \(p\), so that \(\sum_{p=0}^{P-1}n_p=n_x\). Denote the arithmetic work per matrix--vector product on this GPU by \(W_p\), and the total work by \(W=\sum_pW_p\). Below, \(C\) may change between estimates but is independent of the grid size, \(r_x\), and \(P\).

Each plane requires one forward and one inverse two-dimensional FFT. Each output frequency requires at most \(2r_x+1\) multiply-add operations, while zero-padding and restriction require only linear work. Thus, fixed counting constants \(C_F,C_A,C_Z\) give
\begin{equation}\label{eq:sliced-work-count}
\begin{aligned}
W_p
&\le
\underbrace{2C_Fn_pM(\log Q_y+\log Q_z)}_{\text{forward and inverse plane FFTs}}
\\
&\quad+\underbrace{C_An_pM(2r_x+1)}_{\text{spectral plane coupling}}
+\underbrace{C_Zn_pM}_{\text{padding and restriction}}\\
&\le Cn_pM(1+\log M+r_x)
\le Cn_pM(\log M+r_x),
\end{aligned}
\end{equation}
where we used \(\log Q_y+\log Q_z=\log M\) and \(M\ge2\). Summing over the GPUs yields
\begin{equation}\label{eq:sliced-total-work}
W=\sum_{p=0}^{P-1}W_p
\le CM(\log M+r_x)\sum_{p=0}^{P-1}n_p
=Cn_xM(\log M+r_x).
\end{equation}
For a balanced decomposition over \(P\le n_x\) GPUs, the plane counts differ by at most one, and
\[
n_p\le\left\lceil\frac{n_x}{P}\right\rceil
\le\frac{n_x}{P}+1\le\frac{2n_x}{P}.
\]
Equation~\eqref{eq:sliced-work-count} therefore gives
\begin{equation}\label{eq:sliced-per-gpu-work}
W_p\le C\frac{n_xM}{P}(\log M+r_x).
\end{equation}

Set \(\gamma=c_yc_z\ge1\). The bounded padding ratios imply
\begin{equation}\label{eq:sliced-padding-bound}
\begin{aligned}
M&\le\gamma n_yn_z,
&n_xM&\le\gamma N_{\mathrm{dof}},\\
\log M&\le\log\gamma+\log N_{\mathrm{dof}}
\le C_\gamma\log N_{\mathrm{dof}},
\end{aligned}
\end{equation}
where \(C_\gamma\) depends only on \(\gamma\). Substituting~\eqref{eq:sliced-padding-bound} into~\eqref{eq:sliced-total-work} and absorbing the fixed factors into \(C\) gives
\begin{equation}\label{eq:sliced-work-simplification}
\begin{aligned}
W
&\le Cn_xM(\log M+r_x)\\
&\le C\gamma N_{\mathrm{dof}}
(C_\gamma\log N_{\mathrm{dof}}+r_x)\\
&\le CN_{\mathrm{dof}}(\log N_{\mathrm{dof}}+r_x).
\end{aligned}
\end{equation}
Similarly, equation~\eqref{eq:sliced-per-gpu-work} yields
\[
W_p\le C\frac{N_{\mathrm{dof}}}{P}
(\log N_{\mathrm{dof}}+r_x).
\]
We next estimate the volume of data exchanged between subdomains. Suppose that each subdomain contains at least \(r_x\) planes. Proposition~\ref{prop:sliced-exactness} shows that GPU \(p\) needs input spectra only from \([\ell_p-r_x,u_p+r_x]\) to compute its local outputs. At most \(r_x\) planes lie beyond each local boundary, and each plane contains \(M\) complex values. Let \(V_p^{\mathrm{send}}\) and \(V_p^{\mathrm{recv}}\) denote the numbers of complex values sent and received by each GPU, respectively. Then
\begin{equation}\label{eq:sliced-communication-volume}
\begin{aligned}
V_p^{\mathrm{send}}&\le2r_xM,\qquad
V_p^{\mathrm{recv}}\le2r_xM,\\
V_p&:=V_p^{\mathrm{send}}+V_p^{\mathrm{recv}}
\le4r_xM=\mathcal O(r_xM).
\end{aligned}
\end{equation}
All these spectra are supplied by neighboring GPUs, while the forward and inverse transforms are performed locally. Inter-GPU communication therefore involves only neighboring spectral halos, avoiding the global redistribution of volume data required by full 3D  FFT in~\eqref{eq:full-fft-global-dependence}.

For bounded \(r_x\), the linear work of plane coupling is absorbed into the FFT term. Combining the estimates above gives the following bounds for the total arithmetic work and the work per GPU under balanced decomposition:
\[
W=\mathcal O(N_{\mathrm{dof}}\log N_{\mathrm{dof}}),
\qquad
\max_{0\le p<P}W_p
=\mathcal O\!\left(\frac{N_{\mathrm{dof}}\log N_{\mathrm{dof}}}{P}\right).
\]
Under the subdomain-thickness condition above, each GPU exchanges only \(\mathcal O(M)\) complex values with its neighbors.
\end{proof}

\section{Agent-assisted joint optimization across multigrid levels}
\label{sec:multigrid-optimization}

Section~3 developed fast matrix--vector multiplication algorithms for multiple GPUs. We now combine these algorithms with multigrid preconditioning to form an MG--PCG solver and jointly optimize its configurations for the computational and memory requirements at different grid levels. To this end, we build a configuration agent, MG-Pilot, on OpenAI Codex using the GPT-5.6 Sol model. Within a prescribed experimental budget, it uses feasibility checks, pilot experiments, and human audits to select time- and capacity-oriented configurations.

\subsection{Multi-GPU MG--PCG solver}

The multi-GPU solver uses preconditioned conjugate gradients as the outer iteration and a multigrid V-cycle as the preconditioner. The fast algorithms in Section~3 evaluate matrix--vector products in both the outer iteration and the multigrid levels. These algorithm choices form the basis of the levelwise configuration optimization below.

For the algebraic system~\eqref{AS}, let \(\bm U_h^{(k)}\) be the approximate solution at iteration \(k\), with residual \(\bm r_k=\bm F_h-A_h\bm U_h^{(k)}\). One symmetric multigrid V-cycle gives the preconditioned residual \(\bm z_k=\mathcal V_0(\bm r_k)\). Starting from \(\bm p_0=\bm z_0\), the PCG iteration reads~\cite{Barrett1994Templates}: \(\bm q_k=A_h\bm p_k\) and 
\begin{equation}\label{eq:pcg-iteration}
\alpha_k=\frac{\bm r_k^T\bm z_k}{\bm p_k^T\bm q_k}, \; 
\bm U_h^{(k+1)}=\bm U_h^{(k)}+\alpha_k\bm p_k, \; 
\bm r_{k+1}=\bm r_k-\alpha_k\bm q_k,  \;
\bm p_{k+1}=\bm z_{k+1}+\beta_k\bm p_k, \;
\beta_k=\frac{\bm r_{k+1}^T\bm z_{k+1}}{\bm r_k^T\bm z_k}.
\end{equation}
Here, \(\alpha_k\) is the step length, and \(\beta_k\) is the coefficient used to update the search direction.

The product \(\bm q_k=A_h\bm p_k\) in~\eqref{eq:pcg-iteration} is computed by the fast algorithms in Section~3. For example, full 3D FFT first zero-pads \(\bm p_k\) to \(p_k^Q\), as in Section~\ref{sec:full-fft}, and then evaluates
\begin{equation}\label{eq:pcg-fft-product}
Y_k=Cp_k^Q
=\mathcal F_Q^{-1}[\Lambda(\mathcal F_Qp_k^Q)],
\qquad
(\bm q_k)_{\bm i}=(Y_k)_{\bm i},
\quad \bm i\in\mathcal G_h.
\end{equation}

HLD or sliced FFT instead computes \(\bm q_k\) by the method in Section~\hyperlink{subsection.3.2}{3.2} or~\hyperlink{subsection.3.3}{3.3}, respectively. The remaining iteration steps are unchanged.

The multigrid preconditioner successively coarsens the finest grid by a factor of two in each direction. At each level, the discrete matrix \(A_\ell\) is constructed from~\eqref{eq:acfd-full-scheme}, where \(\ell=0,\ldots,J\), \(A_0=A_h\), and \(J\) is the coarsest-level index. Jacobi smoothing updates the approximate correction from the residual, using the algorithms in Section~3 for its matrix--vector products. The relaxation factor is 0.8, with one pre-smoothing step and one post-smoothing step. The coarsest level uses 20 Jacobi iterations. Computations across levels follow a standard symmetric V-cycle~\cite{CuiKanschat2025GPU,Briggs2000Multigrid}.

Within this solver framework, the next subsection defines the levelwise configuration variables, constraints, and optimization objectives.

\subsection{Levelwise configuration space and optimization objectives}
\label{sec:configuration-space}

The performance of the MG--PCG solver depends on how computation and memory are managed across the multigrid hierarchy. As the grid is coarsened, the workload decreases and the balance between computation and communication changes, motivating different configurations at different levels~\cite{Reisner2018Redistribution}. We therefore jointly select the operator algorithm, the number of active GPUs, and the internal arithmetic precision at each level, together with the coarsest grid size, to balance solution time and memory use subject to resource and convergence constraints.

We first consider two levelwise choices: the operator algorithm and the number of active GPUs. Let \(B_\ell\in\{H,S\}\) denote the algorithm used at V-cycle level \(\ell\), where \(H\) and \(S\) represent HLD and sliced FFT, respectively. The matrix--vector multiplication algorithm for outer PCG is specified separately according to problem size and available GPU memory. HLD requires less memory, whereas sliced FFT reduces neighborhood summation at the cost of additional spectrum and workspace storage. We allow at most two algorithm switches along the fine-to-coarse level sequence. Let \(G_\ell\) be the number of GPUs used at level \(\ell\). As the grid is coarsened, subdomains can be aggregated onto fewer GPUs to reduce communication and synchronization costs.

The remaining configuration choices concern the internal precision of the multigrid V-cycle and the coarsest grid size. Let \(n_{64}\) be the number of levels, starting from the finest, that use FP64; the remaining levels use FP32. Thus, \(n_{64}=0\) and \(n_{64}=J+1\) correspond to all-FP32 and all-FP64 internal computations, respectively. This choice balances memory use against preconditioner effectiveness~\cite{HighamMary2022MixedPrecision}. Let \(N_J\) denote the number of intervals in each coordinate direction on the coarsest grid. It determines the number of grid levels and the coarsest-level workload. A candidate configuration is written as
\begin{equation}\label{eq:hierarchical-candidate}
c=(\bm B,\bm G,n_{64},N_J),
\qquad
\bm B=(B_0,\ldots,B_J),\quad
\bm G=(G_0,\ldots,G_J).
\end{equation}

A candidate must support successive factor-two coarsening down to the chosen coarsest grid and satisfy \(G_{\ell+1}\le G_\ell\), with integer ratios \(G_\ell/G_{\ell+1}\). On levels requiring inter-GPU exchange, the local plane count must exceed \(2r_{x,\ell}\), where \(r_{x,\ell}\) is the half-bandwidth along \(x\). This leaves an interior region whose computation can overlap communication. Each configuration must also meet the per-GPU memory budget and the prescribed convergence requirements. Under these constraints, we use total solution time \(T_{\mathrm{solve}}(c)\) and maximum per-GPU memory use \(M_{\mathrm{peak}}(c)\) to select time- and capacity-oriented configurations. The next subsection describes the agent-assisted screening procedure.

\begin{figure}[!htb]
\centering
\includegraphics[width=\textwidth]{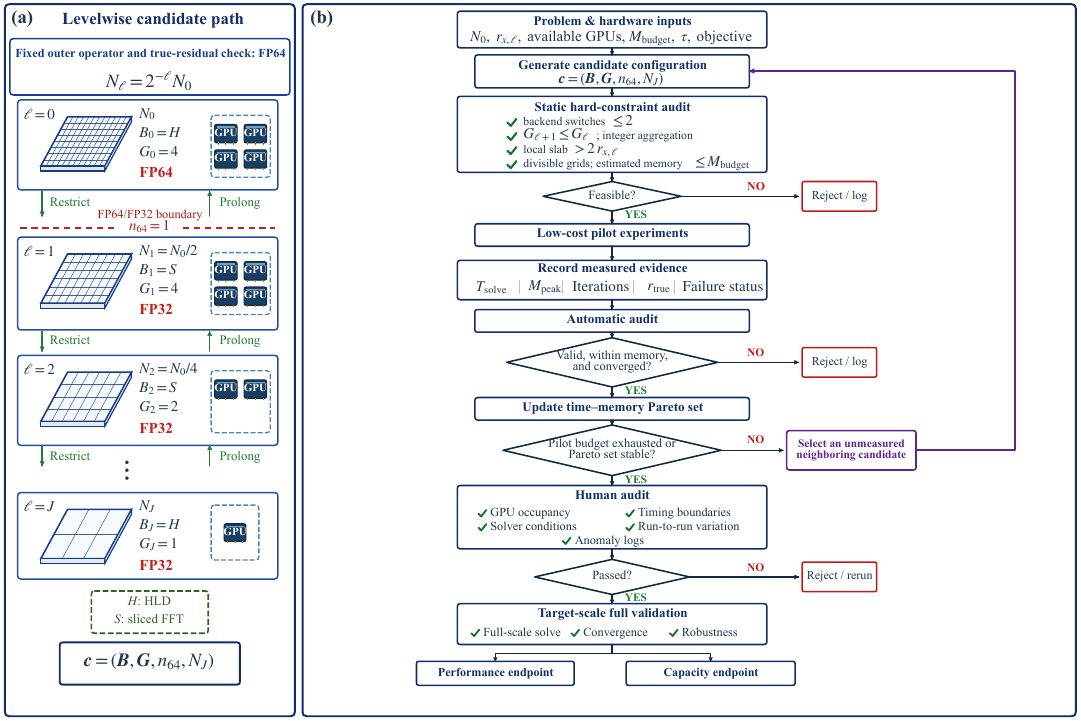}
\caption{MG-Pilot framework for joint configuration across multigrid levels. (a) Levelwise algorithm, GPU allocation, and precision settings, with the coarsest grid size. (b) Feasibility checks, pilot experiments, and audits, followed by target-scale validation to select time- and capacity-oriented configurations.}
\label{fig:agent-framework}
\end{figure}
\subsection{Agent-assisted configuration screening}
\label{sec:hierarchical-configuration-agent}

MG-Pilot generates, tests, and screens candidate configurations within a prescribed experimental budget. As illustrated in Figure~\ref{fig:agent-framework}, feasibility checks and pilot experiments are followed by human audits and validation at the target scale. Results from the pilot experiments guide the choice of subsequent tests.

First, MG-Pilot excludes configurations that violate the grid and resource constraints in Section~\ref{sec:configuration-space}. It then conducts low-cost pilot experiments on feasible candidates, prioritizing endpoint choices and representative intermediate combinations of algorithms, GPU allocations, and precision. The agent generates run configurations, invokes the solver, and extracts solution time \(T_{\mathrm{solve}}(c)\), maximum per-GPU memory use \(M_{\mathrm{peak}}(c)\), and iteration counts from the logs. After each solve, it checks the measured peak memory use against the prescribed memory budget and recomputes the true relative residual \(r_{\mathrm{true}}=\|\bm F_h-A_h\bm U_h\|_2/\|\bm F_h\|_2\) to verify convergence.

After each round of experiments, MG-Pilot compares the solution times and memory use of configurations that satisfy the memory and convergence requirements. It retains non-dominated configurations: a configuration is discarded if another is no slower and uses no more memory, with at least one strict improvement. The agent then builds on the retained configurations, prioritizing tests that vary one configuration choice at a time, and updates the candidates within the experimental budget.

After the pilot experiments, a human audit checks the numerical results and which operations are included in the timings. Configurations that pass the audit undergo complete solves at the target scale. Among the validated configurations at each problem size, those with the shortest solution time and the lowest peak memory use are selected as the time- and capacity-oriented configurations, respectively.

\section{Numerical experiments}\label{sec:numerical}

%

We evaluate the proposed solvers in terms of computational efficiency,
parallel scalability, and memory use, organizing the study around four
questions: (i) how do the three matrix--vector multiplication algorithms
compare on a single GPU, and how does multigrid preconditioning affect them?
(ii) how do they scale on multiple GPUs, and what role does the communication
pattern play? (iii) can agent-assisted joint configuration improve solution
time and memory use across multigrid levels? (iv) how do the solvers perform
on large-scale transient nonlocal diffusion and phase-field problems?

Section~\ref{sec:single-baseline} addresses (i) through iteration counts,
solution times, operator throughput, and Roofline measurements.
Section~\ref{sec:multi-scaling} addresses (ii) through strong- and
weak-scaling tests on up to eight GPUs.
Section~\ref{sec:agent-optimization} addresses (iii) through time- and
capacity-oriented configurations selected by the agent.
Section~\ref{sec:transient-applications} addresses (iv) through an insulated
penny-shaped cutout and large-scale star-shaped interface evolution.

Single- and multi-GPU performance tests use NVIDIA GeForce RTX~4090 GPUs with
24~GiB of memory per device, with up to eight GPUs used in the multi-GPU
experiments. The transient diffusion experiments additionally include runs on
four NVIDIA A800 GPUs. The CPU baseline for the single-GPU comparisons uses an
Intel Core i9-9940X with 14 cores and 28 threads. The GPU implementation runs
on Linux 6.8 and uses Python 3.10.12, CuPy 13.3.0, and CUDA Runtime 12.6.

\subsection{Single-GPU solution and algorithm performance}
\label{sec:single-baseline}
\begin{table}[H]
\centering
\caption{Iterations, solution times, and speedups over CPU--CG for six single-GPU solvers.}
\label{tab:single-gpu-solver-summary}
\small
\begin{tabular*}{0.90\textwidth}{@{\extracolsep{\fill}}rllrrr@{}}
\toprule
\(N\) & Method & Backend/preconditioner & Iter & \(T_{\rm solve}\)(s) & \(S_{\rm CPU}\)\\
\midrule
128 & CG     & direct/none & 60 & 0.2111 & 95.64\\
    & CG     & 3D FFT/none & 57 & 0.0731 & 276.10\\
    & CG     & sliced FFT/none & 57 & 0.0809 & 249.67\\
    & PCG    & direct/MG   & 10 & 0.1762 & 114.62\\
    & PCG    & 3D FFT/MG   & 10 & 0.1890 & 106.85\\
    & PCG    & sliced FFT/MG & 10 & 0.0863 & 233.98\\
\midrule
256 & CG     & direct/none & 115 & 3.4007 & 75.84\\
    & CG     & 3D FFT/none & 114 & 1.1868 & 217.29\\
    & CG     & sliced FFT/none & 114 & 1.0899 & 236.63\\
    & PCG    & direct/MG   & 11 & 1.4164 & 182.08\\
    & PCG    & 3D FFT/MG   & 11 & 0.6015 & 428.71\\
    & PCG    & sliced FFT/MG & 11 & 0.4579 & 563.19\\
\midrule
512 & CG     & direct/none & 230 & 54.877 & 75.33\\
    & CG     & 3D FFT/none & 226 & 26.781 & 154.37\\
    & CG     & sliced FFT/none & 226 & 19.398 & 213.13\\
    & PCG    & direct/MG   & 12 & 12.283 & 336.57\\
    & PCG    & 3D FFT/MG   & 12 & 5.997 & 689.34\\
    & PCG    & sliced FFT/MG & 12 & \textbf{4.054} & \textbf{1,019.78}\\
\bottomrule
\end{tabular*}
\end{table}
The single-GPU experiments evaluate how the matrix--vector multiplication algorithm and multigrid preconditioning affect solution efficiency. We compare iteration counts and total solution times, and examine operator throughput and Roofline measurements to explain the performance differences. For these tests, we solve~\eqref{eq:continuous-problem} on \(\Omega=(0,1)^3\). Each coordinate direction is divided into \(N\) intervals, with \(h=1/N\) and \(\delta=5h\). Let \(p(s)=s(1-s)\), and choose the exact solution \(u_{\rm ex}(x,y,z)=p(x)p(y)p(z)\). We impose the volume constraint \(u=u_{\rm ex}\) on \(\Omega_c\) and determine the right-hand side by substituting \(u_{\rm ex}\) into~\eqref{eq:continuous-problem}. The kernel is \(\rho_\delta(r)=3/(\pi\delta^4r)\) for \(0<r<\delta\), which satisfies the second-moment condition in Section~\ref{sec:acfd-discretization}. All computations use FP64, and iterations stop when the relative residual falls below \(\tau=10^{-8}\).

We compare CG and MG--PCG using direct evaluation, full 3D FFT, and sliced FFT. Table~\ref{tab:single-gpu-solver-summary} reports iteration counts, solution times, and speedups relative to 28-thread CPU--CG using the same grid-cell ACFD discretization and grid size.

As the grid is refined, the CG iteration counts increase substantially, whereas MG--PCG requires only 10--12 iterations over the tested grid sizes. Fewer iterations do not always yield shorter solution times: at \(N=128\), both FFT-based MG--PCG solvers are slower than their CG counterparts. At \(N=512\), multigrid reduces the iteration count for direct evaluation from 230 to 12 and the solution time from 54.877 to 12.283~s. Using sliced FFT further reduces the MG--PCG solution time to 4.054~s, with the iteration count unchanged at 12. This configuration achieves a speedup of 1,019.78 over CPU--CG. Multigrid improves conditioning and reduces iteration counts, while FFT-based reformulation reduces the cost of each matrix--vector product. These effects complement each other on larger grids.

To explain these performance differences, we examine the throughput and floating-point performance of individual matrix--vector products. Throughput is defined as the number of degrees of freedom processed per second. We use CUDA events to time matrix--vector products separately for each algorithm. Floating-point operations and DRAM traffic are measured with Nsight Compute. The Roofline model~\cite{Williams2009Roofline} uses the measured STREAM bandwidth of \(930.2\ {\rm GB\,s^{-1}}\) and FP64 peak performance of \(1.140\ {\rm TFLOP\,s^{-1}}\) on the same device.

\begin{figure}[!htbp]
\centering
\begin{minipage}[t]{0.48\textwidth}
\centering
\subfigure[Degree-of-freedom throughput per matrix--vector product]{\includegraphics[width=0.8\linewidth]{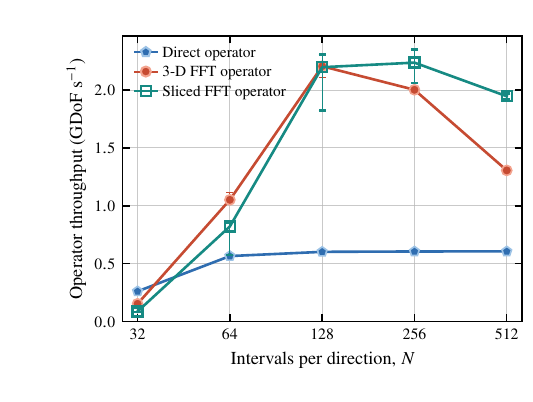}\label{fig:gpu-operator-throughput}}
\end{minipage}\hfill
\begin{minipage}[t]{0.48\textwidth}
\centering
\subfigure[FP64 Roofline based on measured memory traffic]{\includegraphics[width=0.8\linewidth]{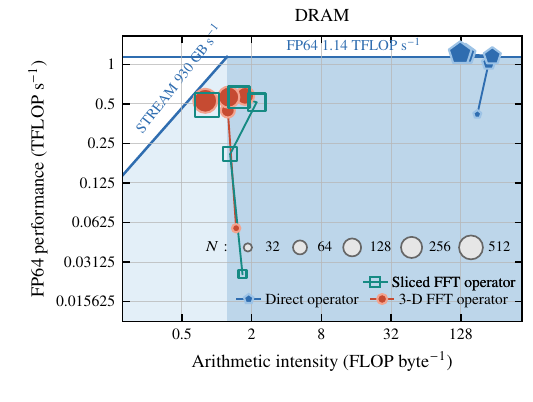}\label{fig:gpu-operator-roofline}}
\end{minipage}
\caption{Single-GPU performance of the three algorithms. (a) Degree-of-freedom throughput of direct evaluation, full 3D FFT, and sliced FFT; error bars show the range of repeated timings. (b) FP64 Roofline measurements, with increasing marker size corresponding to \(N=32,64,128,256,512\).}
\label{fig:gpu-operator-performance}
\end{figure}

Figure~\ref{fig:gpu-operator-performance}(a) compares the throughput of direct evaluation (HLD), full 3D FFT, and sliced FFT. HLD throughput levels off at approximately \(0.61\ {\rm GDoF\,s^{-1}}\) for \(N\ge128\). Both FFT algorithms reach peak throughput at intermediate grid sizes and then decline. At \(N=512\), full 3D FFT and sliced FFT achieve 2.15 and 3.21 times the throughput of direct evaluation, respectively. At \(N=512\), sliced FFT achieves \(1.95\ {\rm GDoF\,s^{-1}}\), approximately 13.0\% below its peak. This decline is smaller than the 40.8\% decrease for full 3D FFT, and sliced FFT retains the highest throughput at the largest tested size.

Figure~\ref{fig:gpu-operator-performance}(b) helps explain these differences in terms of arithmetic intensity and floating-point work. HLD exhibits high arithmetic intensity and achieved FP64 throughput at the larger tested grid sizes. It still requires approximately 2,000 neighbor operations per degree of freedom. The FFT algorithms instead benefit from the reduced floating-point work of the transformed matrix representation. At \(N=512\), full 3D FFT and sliced FFT require approximately 404 and 251 FP64 operations per degree of freedom, respectively. Sliced FFT further reduces computational cost by omitting the Fourier transform along \(x\). For both FFT algorithms, arithmetic intensity does not increase monotonically with grid size. At the larger tested grid sizes, reduced floating-point work allows the FFT algorithms to achieve higher operator throughput despite lower achieved FP64 rates.

\subsection{Multi-GPU performance and scalability}
\label{sec:multi-scaling}

Building on the single-GPU comparisons, we evaluate the solution efficiency and scalability of the three operator algorithms on multiple GPUs, with and without multigrid preconditioning. The tests use \(P=1,2,4,8\) GPUs on a system with four GPUs connected to each non-uniform memory access (NUMA) node. Four-GPU runs remain within one NUMA node, whereas eight-GPU runs span two nodes.

Communication is overlapped with computation. HLD asynchronously exchanges physical-space halos on both sides of each subdomain while evaluating local stencils that require no remote data. Sliced FFT overlaps asynchronous spectral-halo exchange with interior spectral-stencil evaluation. Full 3D FFT requires global exchanges involving all participating GPUs, limiting the extent to which communication can be hidden. For strong scaling, we fix the global grid size at \(N=512\) and define the speedup and parallel efficiency as \(S_P=T_1/T_P\) and \(\eta_P=S_P/P\), where \(T_P\) is the total solution time on \(P\) GPUs. Table~\ref{tab:local-operator-scaling} summarizes the results for CG and MG--PCG.

\begin{table}[H]
\centering
\caption{Strong scaling at \(N=512\) for the grid-cell ACFD discretization. For each solver and \(P=1,2,4,8\), \(T_P\) is the shortest solution time among repeated runs on \(P\) GPUs. The first numerical column reports \(T_1\) in seconds; the remaining columns report speedup \(S_P=T_1/T_P\) and parallel efficiency \(\eta_P=S_P/P\) for \(P=2,4,8\), with efficiencies expressed as percentages.}
\label{tab:local-operator-scaling}
\small
\begin{tabular*}{\textwidth}{@{\extracolsep{\fill}}lrrrrrrr@{}}
\toprule
Solver & \(T_1\)(s) & \(S_2\) & \(\eta_2\) & \(S_4\) & \(\eta_4\) & \(S_8\) & \(\eta_8\)\\
\midrule
HLD--CG              & 54.877 & 1.990 & 99.5\% & 3.998 & 99.9\% & 7.781 & 97.3\%\\
Full 3D FFT--CG      & 26.781 & 1.107 & 55.4\% & 1.636 & 40.9\% & 2.259 & 28.2\%\\
Sliced FFT--CG        & 19.398 & 1.970 & 98.5\% & 3.758 & 94.0\% & 7.456 & 93.2\%\\
HLD--MG--PCG          & 12.283 & 1.961 & 98.1\% & 3.768 & 94.2\% & 7.105 & 88.8\%\\
Full 3D FFT--MG--PCG & 5.997 & 0.892 & 44.6\% & 0.914 & 22.9\% & 2.540 & 31.7\%\\
Sliced FFT--MG--PCG   & 4.054  & 1.938 & 96.9\% & 3.352 & 83.8\% & 6.018 & 75.2\%\\
\bottomrule
\end{tabular*}
\end{table}

Although HLD--CG has the longest single-GPU solution time, it achieves an eight-GPU speedup of 7.781, corresponding to a parallel efficiency of 97.3\%. Sliced FFT--CG combines a shorter single-GPU solution time with an eight-GPU speedup of 7.456 and an efficiency of 93.2\%. In contrast, full 3D FFT--CG achieves an eight-GPU speedup of only 2.259, with an efficiency of 28.2\%. These results show that the communication pattern strongly affects how single-GPU computational performance translates into multi-GPU solution efficiency.

Sliced FFT--MG--PCG gives the shortest solution times for all tested GPU counts. On eight GPUs, its speedup is 6.018 and its parallel efficiency is 75.2\%, compared with 93.2\% for sliced FFT--CG. Small-batch transverse FFTs on coarse grids and communication between multigrid levels can limit the parallel efficiency of the preconditioned solver. Full 3D FFT--MG--PCG is particularly sensitive to parallel overhead: its speedups on two and four GPUs are 0.892 and 0.914, respectively, so both runs take longer than the corresponding single-GPU solve. Thus, parallel efficiency must be considered together with total solution time when comparing the solvers.

Figure~\ref{fig:strong-weak-scaling}(a) plots the CG speedups from Table~\ref{tab:local-operator-scaling} against the ideal strong-scaling curve \(S_P=P\). Figure~\ref{fig:strong-weak-scaling}(b) examines weak scaling of a single matrix--vector product. Each GPU holds \(255^3\) interior nodes, and the global domain is extended along \(x\) as the GPU count increases. With a fixed workload per GPU, the matrix--vector multiplication time should ideally remain constant, giving a normalized time of one.

\begin{figure}[H]
\centering
\includegraphics[width=0.8\textwidth]{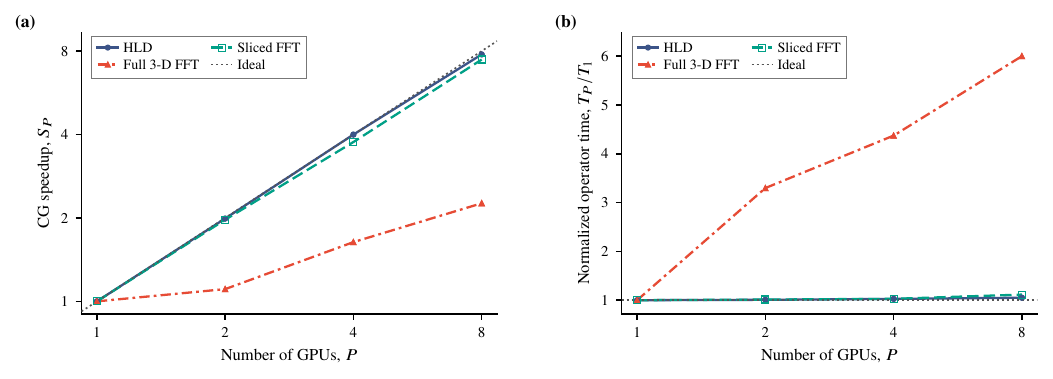}
\caption{Multi-GPU scalability of the three algorithms. (a) CG solution speedups at \(N=512\) from Table~\ref{tab:local-operator-scaling}; the dotted line shows the ideal speedup \(S_P=P\). (b) Weak scaling of a single matrix--vector product with \(255^3\) interior nodes per GPU; the dotted line shows the ideal normalized time \(T_P/T_1=1\).}
\label{fig:strong-weak-scaling}
\end{figure}

HLD and sliced FFT remain close to the ideal behavior in both tests. On eight GPUs, their normalized matrix--vector multiplication times are approximately 1.051 and 1.115, respectively, whereas that of full 3D FFT increases to approximately 6.001. The eight-GPU runs also involve communication between the two NUMA nodes, which can contribute to departures from ideal scaling. Taken together, the strong- and weak-scaling results show that neighboring halo exchanges allow HLD and sliced FFT to retain good scalability, whereas global data redistribution limits the scalability of full 3D FFT.

\subsection{Evaluation of agent-assisted joint optimization}
\label{sec:agent-optimization}


The computational and memory trade-offs between HLD and sliced FFT motivate
joint configuration across multigrid levels. The configuration space across
multigrid levels is large. In the six-level hierarchy considered here, each
level can, in principle, combine two matrix--vector multiplication
algorithms, up to three GPU counts (four, two, or one), and two arithmetic
precisions (FP64 and FP32) for the V-cycle, while the outer PCG remains in
FP64. Before imposing cross-level and resource constraints, these choices
alone yield approximately \((2\times3\times2)^6\approx3.0\times10^6\)
combinations, without accounting for the choice of coarsest-grid size.
Through feasibility screening, MG-Pilot retained 22 candidate configurations.
Further screening and human auditing yielded nine representative
agent-selected configurations, which together with the all-FP64 baseline are
reported in Table~\ref{tab:mixed-capacity}. By comparison, an exhaustive
manual search would require several weeks of developer effort, since each
candidate requires changes to the CuPy/CUDA execution paths, recompilation,
testing, and performance profiling.

We first evaluate the time- and capacity-oriented configurations selected by
MG-Pilot on four GPUs and assess the effect of reduced V-cycle precision on
solution accuracy. We then examine large-scale solution performance and
attainable problem sizes on eight GPUs.

\subsubsection{Joint optimization for large-scale solution}

We use the manufactured-solution problem from Section~\ref{sec:single-baseline} at \(N=1{,}024\) on four GPUs. FP64 sliced FFT exceeds the available GPU memory on the finest grid, so we use an all-FP64 HLD--MG--PCG solver as the baseline. The baseline keeps four GPUs active at every level and uses a coarsest grid of \(N_J=128\). For the candidate configurations, HLD is fixed for the outer matrix--vector products and the finest V-cycle level. MG-Pilot selects HLD or sliced FFT for the remaining levels, together with GPU allocations and internal precision, while the outer PCG remains in FP64. The agent-selected candidates shown in Table~\ref{tab:mixed-capacity} use \(N_J=32\), with the coarsest grid aggregated onto one GPU. Here, \(S_{\mathrm{cfg}}\) denotes the speedup relative to the baseline.

\begin{table}[H]
\centering
\caption{Four-GPU HLD--MG--PCG baseline and agent-selected joint configurations at \(N=1{,}024\) for the grid-cell ACFD discretization. \(H\) and \(S\) denote HLD and sliced FFT, respectively; paths are ordered from fine to coarse levels, and the outer PCG remains in FP64.}
\label{tab:mixed-capacity}
\footnotesize
\setlength{\tabcolsep}{1.9pt}
\begin{tabular*}{\textwidth}{@{\extracolsep{\fill}}clllrrr@{}}
\toprule
Configuration & Backend path & GPU path & FP32 levels
& \(T_{\rm solve}\)(s) & \(M_{\rm peak}\)(GiB) & \(S_{\rm cfg}\)\\
\midrule
HLD--MG--PCG & \(H-H-H-H\) & \(4-4-4-4\) & -- & 46.349 & 13.084 & 1.00\\
\(C_{\rm perf}\) & \(H-S-S-S-H-H\) & \(4-4-4-2-1-1\) & 1,024--32 & \textbf{20.060} & 15.090 & \textbf{2.31}\\
\(C_{\rm mem}\) & \(H-H-H-H-H-H\) & \(4-4-4-4-2-1\) & 512--32 & 26.269 & \textbf{12.770} & 1.76\\
\(C_2\) & \(H-H-H-H-H-H\) & \(4-4-4-4-2-1\) & 256--32 & 27.257 & 13.047 & 1.70\\
\(C_3\) & \(H-H-H-H-H-H\) & \(4-4-4-2-1-1\) & 256--32 & 27.254 & 13.059 & 1.70\\
\(C_4\) & \(H-S-S-S-H-H\) & \(4-4-4-2-1-1\) & 512--32 & 25.226 & 14.219 & 1.84\\
\(C_5\) & \(H-H-S-S-S-H\) & \(4-4-4-4-2-1\) & 256--32 & 27.284 & 13.236 & 1.70\\
\(C_6\) & \(H-H-S-S-S-H\) & \(4-4-4-4-1-1\) & 256--32 & 27.136 & 13.244 & 1.71\\
\(C_7\) & \(H-H-H-S-S-H\) & \(4-4-4-2-1-1\) & 256--32 & 27.223 & 13.119 & 1.70\\
\(C_8\) & \(H-H-S-S-H-H\) & \(4-4-4-2-1-1\) & 256--32 & 27.086 & 13.252 & 1.71\\
\bottomrule
\end{tabular*}
\end{table}

\begin{table}[H]
\centering
\caption{Errors relative to the exact solution and observed convergence orders for all-FP64 and mixed-precision solves. In the mixed-precision runs, the V-cycle uses FP32, while the outer PCG and its matrix--vector products use FP64.}
\label{tab:mixed-accuracy}
\small
\begin{tabular*}{0.90\textwidth}{@{\extracolsep{\fill}}clrrr@{}}
\toprule
\(N\) & Precision & \(E_\infty\) & Order & \(\Delta_E\)\\
\midrule
128 & FP64 & \(9.389693\times10^{-7}\) & -- & --\\
 & mixed & \(9.389693\times10^{-7}\) & -- & \(5.66\times10^{-8}\)\\
\addlinespace[2pt]
256 & FP64 & \(2.326761\times10^{-7}\) & 2.013 & --\\
 & mixed & \(2.326761\times10^{-7}\) & 2.013 & \(1.41\times10^{-7}\)\\
\addlinespace[2pt]
512 & FP64 & \(5.796708\times10^{-8}\) & 2.005 & --\\
 & mixed & \(5.796708\times10^{-8}\) & 2.005 & \(2.94\times10^{-8}\)\\
\addlinespace[2pt]
1,024 & FP64 & \(1.450308\times10^{-8}\) & 1.999 & --\\
 & mixed & \(1.450307\times10^{-8}\) & 1.999 & \(2.06\times10^{-7}\)\\
\bottomrule
\end{tabular*}
\end{table}
Among the configurations in Table~\ref{tab:mixed-capacity}, \(C_{\mathrm{perf}}\) is fastest at 20.060~s, achieving a speedup of 2.31 over the baseline. Its peak memory use is 15.090~GiB, compared with 13.084~GiB for the baseline. \(C_{\mathrm{mem}}\) uses the least memory, 12.770~GiB, and solves the problem in 26.269~s. Configurations \(C_4\) and \(C_{\mathrm{perf}}\) share the same algorithm and GPU paths and differ only in the precision of the finest V-cycle level. Switching this level from FP64 to FP32 reduces solution time from 25.226 to 20.060~s, a reduction of 20.5\%.

To assess the effect of reduced internal precision on solution accuracy, Table~\ref{tab:mixed-accuracy} compares all-FP64 computations with a mixed-precision variant using FP32 internally and FP64 in outer PCG. The approximate-ball geometry is refined as the grid spacing is halved, keeping geometric and discretization errors of the same order. The final relative residuals in this test are all below \(10^{-10}\). We measure the solution error by \(E_\infty=\max_{\bm i\in\mathcal G_h}|U_{\bm i}-u_{\rm ex}(\bm x_{\bm i})|\). Denote the errors by \(E_\infty^{64}\) and \(E_\infty^{\rm mix}\), and measure their relative difference by \(\Delta_E=\lvert E_\infty^{\rm mix}-E_\infty^{64}\rvert/E_\infty^{64}\). The observed order is \(p=\log_2(E_\infty(N)/E_\infty(2N))\).

Both precision choices exhibit second-order convergence, and the maximum relative difference between their solution errors is \(2.06\times10^{-7}\). These results show that using FP32 within the V-cycle retains solution accuracy comparable to all-FP64 computations over the tested grids.

\subsubsection{Large-scale computation on eight GPUs}

We assess large-scale solution performance and capacity on eight RTX~4090 GPUs using the manufactured-solution problem from Section~\ref{sec:single-baseline}. Field variables and discrete weights are stored in FP32 to reduce memory use, and V-cycle computations use FP32. Neighborhood accumulation and diagonal calculations in the outer HLD algorithm, together with inner-product reductions, use FP64. For the performance tests, HLD is fixed for the outer matrix--vector products and the finest V-cycle level, while sliced FFT is used on selected coarse levels. GPU aggregation paths and coarsest grids are compared. The first four rows of Table~\ref{tab:eight-gpu-large-scale} report the fastest tested configurations at \(N=1{,}536,1{,}728,1{,}920,2{,}048\), while the last two rows report capacity tests. Degrees of freedom are counted as the \((N-1)^3\) interior nodes.

\begin{table}[H]
\centering
\caption{Eight-GPU large-scale results. \(H\) and \(S\) denote HLD and sliced FFT; paths are ordered from fine to coarse grids. The last two rows report capacity tests.}
\label{tab:eight-gpu-large-scale}
\small
\begin{tabular*}{\textwidth}{@{\extracolsep{\fill}}rrccrr@{}}
\toprule
\(N\) & DoFs & Backend path & GPU path & \(N_J\) & \(T_{\rm solve}\) (s)\\
\midrule
1,536 & 3,616,805,375 & \(H-S-S-S-S-S-H\) & \(8-8-8-8-8-4-1\) & 24 & 43.018\\
1,728 & 5,150,827,583 & \(H-S-S-S-S-S-H\) & \(8-8-8-8-8-4-1\) & 27 & 61.377\\
1,920 & 7,066,834,559 & \(H-S-S-S-S-S-H\) & \(8-8-8-4-2-1-1\) & 30 & 92.757\\
2,048 & 8,577,357,823 & \(H-H-H-S-S-S-H\) & \(8-8-8-8-8-4-1\) & 32 & 115.649\\
\midrule
2,112 & 9,407,293,631 & \(H-H-H-H-H-H-H\) & \(8-8-8-8-4-2-1\) & 33 & 151.412\\
2,176 & 10,289,109,375 & \(H-H-H-H-H-H-H\) & \(8-8-8-8-4-2-1\) & 34 & 166.071\\
\bottomrule
\end{tabular*}
\end{table}

At the first three problem sizes, the selected configurations use sliced FFT from the first coarse level. At \(N=2{,}048\), memory constraints require sliced FFT to start at the \(N=256\) level, with HLD used on the finer levels.

The capacity tests use all-HLD configurations at \(N=2{,}112\) and \(N=2{,}176\). The largest tested grid, \(N=2{,}176\), contains approximately 10.29 billion degrees of freedom and is solved in 166.071~s.

\subsection{Applications to 3D transient nonlocal diffusion and phase-field evolution}
\label{sec:transient-applications}

We next apply the proposed solvers to 3D  transient nonlocal diffusion and nonlinear phase-field evolution. The first example involves an insulated penny-shaped cutout and compares temperature evolution and simulation cost with published results. The second demonstrates large-scale star-shaped interface evolution and convergence toward the local limit.

\subsubsection{Transient diffusion with an insulated penny-shaped cutout}
\label{sec:insulated-cutout}

We consider the insulated penny-shaped cutout problem studied in~\cite{Jafarzadeh2021FCBM}. The temperature satisfies
\begin{equation}
\partial_t u(\bm x,t)-D\mathcal L_\delta u(\bm x,t)
=f(\bm x,t),\qquad \bm x\in\Omega,\ t>0.
\label{eq:cutout-problem}
\end{equation}
Here, \(D\) is the diffusion coefficient and \(f\) is the volumetric heat source. The physical domain is \(\Omega=(-1,1)^3\setminus\mathcal C\), where \(\mathcal C=\{\bm x:x_1^2+x_2^2<0.7^2,\ |x_3|<0.1\}\) is the central cutout. Denote the upper and lower cube faces by \(\Gamma_D^\pm=\{\bm x\in\partial(-1,1)^3:x_3=\pm1\}\), and the other four faces by \(\Gamma_{\rm side}\). We set \(D=1\), \(f=0\), and zero initial temperature, with \(u=1\) on \(\Gamma_D^+\) and \(u=0\) on \(\Gamma_D^-\). Insulation on \(\Gamma_{\rm side}\) and the upper and lower cutout faces is imposed by reflection extensions following~\cite{Jafarzadeh2021FCBM}.

A uniform grid is defined on the fixed cube \(B_{L_b}=(-L_b/2,L_b/2)^3\), with \(L_b=2.56\) and \(N\) intervals in each direction. We set \(h=L_b/N\), \(\delta=5h\), and \(k=3\). The volume-constraint layer used for boundary extension has thickness \(\delta\), which decreases under grid refinement. The gap between corresponding faces of the computational box and physical cube remains \((L_b-2)/2=0.28\). It contains the volume-constraint layer and additional space, as shown in Figure~\ref{fig:cutout-geometry}. The kernel is \(\rho_\delta(r)=9/(4\pi\delta^3r^2)\) for \(0<r<\delta\). We use \(\Delta t=10^{-3}\) and advance 1,000 steps to \(T=1\). On four RTX~4090 GPUs at \(N=1{,}024\), we use configuration \(C_{\rm perf}\).
\begin{figure}[H]
\centering
\includegraphics[width=0.6\textwidth]{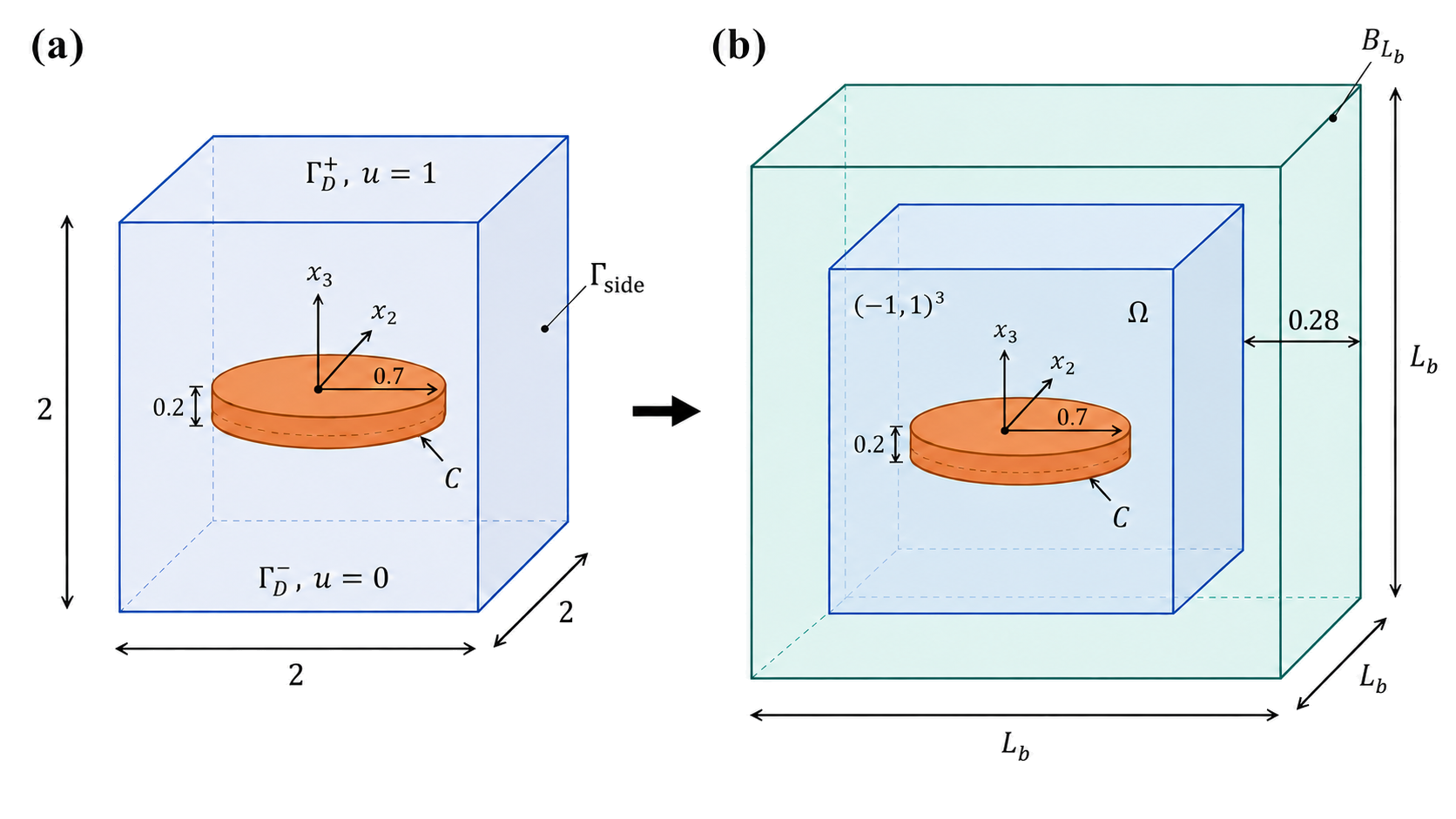}
\caption{Geometry of the insulated penny-shaped cutout problem. (a) Physical domain \(\Omega\), central cutout \(\mathcal C\), and thermal boundary conditions. (b) Physical domain within the fixed computational box \(B_{L_b}\).}
\label{fig:cutout-geometry}
\end{figure}
\begin{figure}[H]
\centering
\includegraphics[width=0.7\textwidth]{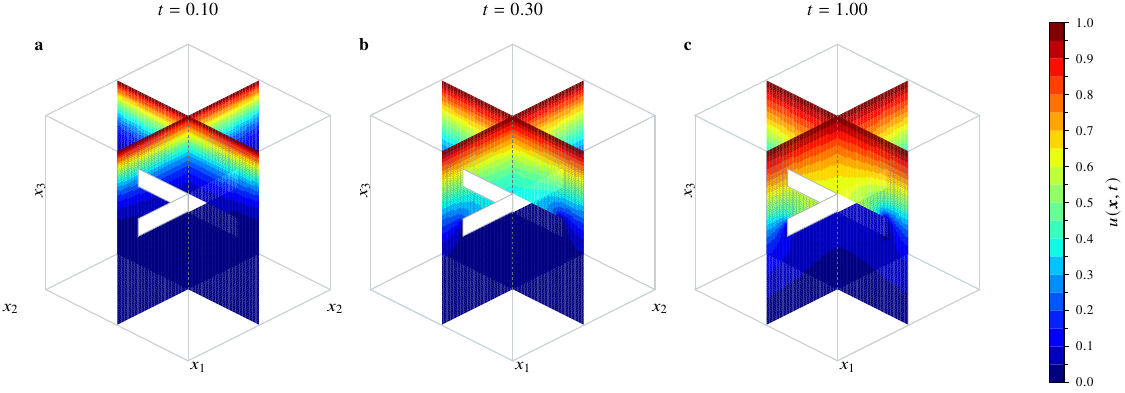}
\caption{Transient 3D nonlocal diffusion with an insulated penny-shaped cutout (grid-cell ACFD, \(N=512\)). Panels (a)--(c) correspond to \(t=0.1,0.3,1.0\), respectively. Each panel shows two perpendicular vertical temperature sections, \(x_1=0\) and \(x_2=0\). The white V-shaped region marks the intersection of the removed cutout with these sections, not zero temperature. All panels use the same color scale.}
\label{fig:cutout-evolution}
\end{figure}
Let \(t_n=n\Delta t\), let \(\bm U_h^n\) denote the grid solution, and let \(\bm F_h(t_n)\) include the source and volume-constraint contributions. Backward Euler discretization gives
\begin{equation}
\frac{\bm U_h^{n+1}-\bm U_h^n}{\Delta t}
+D A_h\bm U_h^{n+1}=\bm F_h(t_{n+1}).
\label{eq:transient-be}
\end{equation}
Each time step is solved by MG--PCG. The preconditioner is constructed from the shifted operators \(I+\Delta t D A_\ell\) and uses the symmetric V-cycle of Section~\hyperlink{subsection.4.1}{4.1}.

Figure~\ref{fig:cutout-evolution} shows the temperature evolution at \(N=512\). Heat diffuses downward from the upper face, with isotherms bending around the cutout and a marked delay in warming below it. The main features agree with~\cite{Jafarzadeh2021FCBM}. Table~\ref{tab:cutout-runtime} compares total runtimes. The reference advances about 18,200 explicit time steps; our backward-Euler solver advances 1,000 implicit steps.

At \(N=512\), four RTX~4090 GPUs require 0.210~h, compared with 3.9~h on 16 CPU cores and 2.1~h on one V100 for the reference method. As the grid grows, memory capacity further affects algorithm selection. At \(N=1{,}024\), four A800 GPUs require 7.710~h with the same joint configuration as the RTX~4090 runs. Replacing both outer PCG matrix--vector products and those on the finest V-cycle level with sliced FFT reduces the runtime to 1.679~h. At \(N=2{,}048\), the HLD-based finest-level configuration on A800 requires more than 30~h. The larger A800 memory allows sliced FFT on the finest level, reducing the runtime to 9.260~h. Thus, multi-GPU performance depends on both algorithm choice and GPU memory capacity, with larger memory enabling the use of more efficient algorithms, such as sliced FFT on the finest level.

\begin{table}[htbp]
\centering
\caption{Total runtime to \(T=1\) for the insulated penny-shaped cutout problem. FCBM--EC data are taken from Table~1 of Jafarzadeh et al.~\cite{Jafarzadeh2021FCBM}; the present timings exclude weight construction and solver initialization.}
\label{tab:cutout-runtime}
\footnotesize
\renewcommand{\arraystretch}{1.15}
\setlength{\tabcolsep}{4pt}
\begin{tabular}{@{}lllllll@{}}
\toprule
Method & Hardware & \(N=128\) & \(N=256\) & \(N=512\) & \(N=1{,}024\) & \(N=2{,}048\)\\
\midrule
FCBM--EC nodes & -- & 2,097,152 & 16,777,216 & 134,217,728 & 1,073,741,824 & --\\
ACFD DoFs & -- & 2,048,383 & 16,581,375 & 133,432,831 & 1,070,599,167 & 8,577,357,823\\
\midrule
FCBM--EC in~\cite{Jafarzadeh2021FCBM} & 16 CPU & 2.7 min & 34.7 min & 3.9 h & 1.3 days & --\\
FCBM--EC in~\cite{Jafarzadeh2021FCBM} & V100 GPU & 29 min & 38.3 min & 2.1 h & OOM & --\\
FFT/HLD--MG--PCG & RTX4090 GPU \(\times 4\) & 1.065 min & 1.971 min & 0.210 h & 5.983 h & OOM\\
FFT/HLD--MG--PCG & RTX4090 GPU \(\times 8\) & 1.378 min & 2.436 min & 0.174 h & 3.005 h & 27.732 h\\
FFT/HLD--MG--PCG & A800 GPU \(\times 4\) & 1.308 min & 2.875 min & 0.197 h & 1.679 h & 9.260 h\\
\bottomrule
\end{tabular}
\end{table}

\subsubsection{3D  star-shaped interface evolution and the local limit}
\label{sec:allen-cahn-star}
We here extend the computation to the nonlocal Allen--Cahn equation with a nonlinear reaction term \cite{DuJuLiQiao2019ETD}:
\begin{equation}\label{eq:allen-cahn-test}
    \partial_tu=\varepsilon^2\mathcal L_\delta u+u-u^3,
    \qquad u(x,0)=u_0(x).
\end{equation}
Following the 3D  star-shaped interface test in~\cite{Geng2025NonlocalPhasefield}, we impose periodic boundary conditions on \(\Omega=[-1,1)^3\) and set \(\varepsilon=0.06\). The initial condition is
\(u_0(x)=\tanh((R(x)-r)/0.035)\), where \(r=|x|\) and \(R(x)=0.23+0.45(x_1^4+x_2^4+x_3^4)/r^4\); the quotient is set to zero at the origin. The cited 3D  test uses a \(64^3\) grid. Here, we compute up to \(2048^3\) and compare with the local limit.

We use \(N=64,128,256,512,1024,2048\) periodic grid points in each direction, with \(h=2/N\) and \(\delta=5h\). The kernel is \(\rho_\delta(r)=3/(\pi\delta^4r)\) for \(0<r<\delta\) and zero outside the ball. The nonlocal diffusion operator is discretized by the ACFD scheme, and the computation uses its sliced FFT representation.

Time integration uses the second-order exponential time differencing Runge--Kutta scheme (ETDRK2) of Du et al.~\cite{DuJuLiQiao2019ETD}, with stabilization parameter 2 and \(\Delta t=0.005\) up to \(T=10\). For comparison, we replace \(\mathcal L_\delta\) by \(\Delta\) and compute a local reference solution \(U_{\rm loc}\) on a \(1024^3\) grid using Fourier spectral discretization and \(\Delta t=0.0025\). We compare the final states at common nodes using \(E_2=\|U-U_{\rm loc}\|_2/\|U_{\rm loc}\|_2\) and record the discrete energy \(E_h=\varepsilon^2h^3U^TA_hU/2+h^3\sum_i(U_i^2-1)^2/4\).

\begin{figure}[H]
    \centering
    \includegraphics[width=0.84\textwidth]{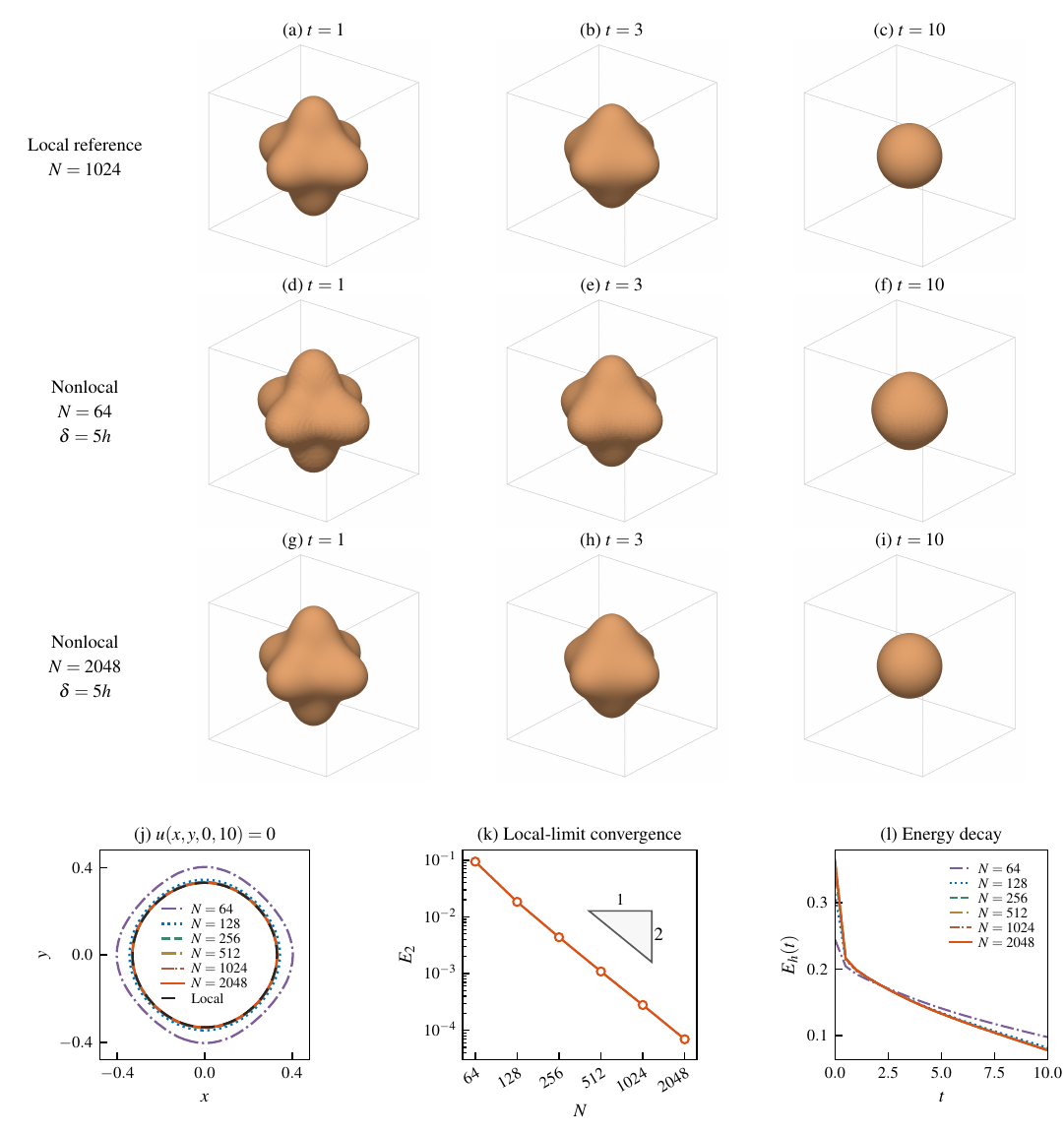}
    \caption{3D  star-shaped interface evolution and the local limit. Panels (a--c) show the local reference solution (\(N=1024\)); (d--f), the nonlocal solution with \(N=64\); and (g--i), the nonlocal solution with \(N=2048\). The \(u=0\) isosurfaces are shown at \(t=1,3,10\). The nonlocal computations use \(\delta=5h\). Quantitative comparisons for \(N=64,128,256,512,1024,2048\): (j) final-state \(u=0\) contours in the \(z=0\) plane; (k) final-state full-field error \(E_2\) relative to the local reference, with a triangle indicating a second-order slope; (l) discrete energy.}
    \label{fig:star-ac}
\end{figure}
\clearpage

In Figure~\ref{fig:star-ac}, the protrusions gradually disappear as the interface shrinks toward a sphere. As the grid is refined from \(N=64\) to \(N=2048\), the interaction radius \(\delta=5h\) decreases accordingly, and the nonlocal numerical solution approaches the local reference solution. The final-state relative error \(E_2\) has convergence rates of 2.35, 2.07, 2.01, 1.96, and 2.01 on successive grids, maintaining approximately second-order AC convergence. The energy decreases with time in all runs. The \(1024^3\) and \(2048^3\) grids contain approximately 1.07 and 8.59 billion nodes, respectively. At \(N=2048\), eight RTX~4090 GPUs complete 2,000 time steps in 2247.0~s, averaging 1.12~s per step.

\section{Conclusions}

We have formulated and numerically validated a sliced FFT algorithm for the
ACFD discretization of three-dimensional peridynamic transient diffusion
problems. The algorithm rests on the following operator identity: the discrete
convolution induced by the ACFD scheme admits a multilevel Toeplitz
representation whose application can be decomposed along slices, so that the
global data redistribution characteristic of full three-dimensional FFTs is
replaced by local communication while the discrete operator itself remains
unaltered. 

The computational complexity of the sliced FFT operator evaluation is
\(\mathcal{O}(N_{\mathrm{dof}}\log N_{\mathrm{dof}})\). The matrix-free
formulation eliminates explicit storage of the global stiffness matrix, and the
multigrid preconditioner reduces the iteration count of the conjugate gradient
solver. MG-Pilot provides a level-wise selection of algorithm, GPU count,
arithmetic precision, and coarsening strategy under human auditing, yielding
configurations optimized for either time-to-solution or memory capacity.

The numerical evidence establishes the following. Strong scaling of both
HLD--CG and sliced FFT--CG is approximately linear on up to eight
RTX~4090 GPUs. The framework accommodates approximately
\(1.029 \times 10^{10}\) degrees of freedom on eight such GPUs. For the
insulated-cutout benchmark, the attainable problem size exceeds that of the
existing fast convolution method, and the computational time is reduced. For the
three-dimensional star-shaped interface evolution problem, the nonlinear
phase-field computation is efficient, and the discrete solution converges to
the local limit as the mesh spacing and interaction radius are reduced
simultaneously, consistent with the asymptotic compatibility of the underlying
discretization.

The sliced FFT evaluation requires only that the discrete operator admit a
multilevel Toeplitz representation with finite interaction range; it is
therefore not restricted to the peridynamic setting. In particular, when a
classical local model is discretized in a form that retains such a
structure, the same operator evaluation strategy applies without
modification, and the asymptotic-compatibility argument carries over. Whether
the resulting solver is competitive with sparse direct or geometric multigrid
methods for purely local operators is a separate question that depends on the
interaction range relative to the grid spacing, and is not claimed here.

Future work will investigate fast solvers for nonuniform and adaptively refined grids. We will also extend the framework to Helmholtz and peridynamic models for elasticity problems and develop suitable preconditioners and multi-GPU solution algorithms.


\end{document}